\providecommand{\U}[1]{\protect \rule{.1in}{.1in}}
\theoremstyle{change}
\newtheorem{definition}{Definition:}[section]
\newtheorem{proposition}[definition]{Proposition:}
\newtheorem{theorem}[definition]{Theorem:}
\newtheorem{lemma}[definition]{Lemma:}
\newtheorem{corollary}[definition]{Corollary:}
{\theorembodyfont{\rmfamily}
\newtheorem{remark}[definition]{Remark:}
}
{\theorembodyfont{\rmfamily}
\newtheorem{example}[definition]{Example:}
}
\newenvironment{proof}
{{\bf Proof:}}
{\qquad \hspace*{\fill} $\Box$}
\newcommand{\fg}{\mathfrak{g}}
\newcommand{\fn}{\mathfrak{n}}
\newcommand{\fh}{\mathfrak{h}}
\newcommand{\Ad}{\operatorname{Ad}}
\newcommand{\ad}{\operatorname{ad}}
\newcommand{\inner}{\operatorname{int}}
\newcommand{\rme}{\mathrm{e}}
\newcommand{\CC}{\mathcal{C}}
\newcommand{\LC}{\mathcal{L}}
\newcommand{\RC}{\mathcal{R}}
\newcommand{\SC}{\mathcal{S}}
\newcommand{\UC}{\mathcal{U}}
\newcommand{\DC}{\mathcal{D}}
\newcommand{\XC}{\mathcal{X}}
\newcommand{\N}{\mathbb{N}}
\newcommand{\R}{\mathbb{R}}
\begin{document}

\title{Affine and bilinear systems on Lie groups}
\author{V\'{\i}ctor Ayala\\Instituto de Alta Investigaci\'{o}n\\Universidad de Tarapac\'{a}\\Casilla 7D, Arica, Chile and \\Departamento de Matem\'aticas\\Universidad Cat\'{o}lica del Norte\\Av. Angamos 0610, Antofagasta, Chile\and Adriano Da Silva\\Instituto de Matem\'{a}tica\\Universidade Estadual de Campinas\\Cx. Postal 6065, 13.081-970 Campinas-SP, Brasil\\and\and Max Ferreira\\Departamento de Matem\'{a}tica\\Universidade Federal de Roraima\\Av. Capit\~{a}o Ene Garcez, 2413\\Aeroporto 69310-000\\Boa Vista - RR, Brasil.}
\date{\today }
\maketitle

\begin{abstract}
In this paper we study affine and bilinear systems on Lie groups. We show that
there is an intrinsic connection between the solutions of both systems. Such
relation allows us to obtain some preliminary controllability results of
affine systems on compact and solvable Lie groups. We also show that the controllability
property of bilinear systems is very restricted and may only be achieved if
the state space $G$ is an Euclidean space.

\end{abstract}

Keywords: Affine systems, bilinear systems, controllability

\section{Introduction}

An affine system on a connected Lie group $G$ is a family
\begin{flalign*}
&&\dot{x}(t)=F^0(x(t))+\sum_{j=1}^m\omega_j(t)F^j(x(t)),&&
\end{flalign*}
of ordinary differential equations, where $\omega:=(\omega_{1},\ldots,\omega_{m})\in\UC$ is a piecewise constant function and $F^0, F^1, \ldots, F^m$ are affine vector fields. 

The class of affine systems are in fact quite large since it contains the classical linear and bilinear systems on the Euclidean space $\mathbb{R}^{d}$ and more generally the invariant, linear and bilinear systems on $G$ (see \cite{AyTi}, \cite{Elliot}, \cite{Sachkov} and \cite{Wonham}). Therefore, the dynamic involved here is really much more complicated than those of the mentioned systems. 

In the present paper we exploit the intrinsic connection between affine and bilinear systems in order to obtain controllability results for affine systems. One example where one can see how strong is such connection is given for $G=\R^n$ by Jurdevic and Sallet in \cite{Jurd}. There the authors showed that an affine system is controllable as soon as it has no singularities and its associated bilinear system is controllable in $\R^n\setminus\{0\}$. However, any other class of Lie groups contains nontrivial proper subsets that are naturally invariant by automorphisms implying that controllability of any bilinear system on $G\setminus\{e\}$ can only be expected when $G$ is isomorphic to $\R^n$ (see Theorem \ref{contbilinear} ahead). Therefore, generalizations of the result of Jurdevic and Sallet for more general Lie groups are not possible.  

The above forces us to look at affine systems in a more geometric way by using the above invariant subsets as done in \cite{ADS} and \cite{DS} for linear systems. In order to do that we first prove that there is an intrinsic connection between the solutions of any affine system and its associated bilinear system. More accurate, the solutions of an affine system are given by left translation of the solutions of their associated bilinear system. Using such formula we are able to generalize some results from \cite{DS} allowing us to prove controllability results for affine systems on compact and solvable Lie groups under the assumption of local controllability around the identity.

This paper is structured as follows. In Section 2 we introduce the basic concepts about control systems and affine vector fields. In Section 3 we analyze bilinear systems on Lie groups. We give an explicity formula for the solutions of such systems and show that the controllability of bilinear systems in only to be expected in Euclidean spaces. Section $4$ is devoted to the understanding of affine systems. We show that the solution of an affine system is given by left translation of the solution of its associated bilinear system. Such expression allow us to prove prove some results concerning the controllability of affine systems on compact and solvable Lie groups.



\section{Preliminaries}

In this section, we introduce basic concepts that will be needed through the paper.

\subsection{Notations}

By a smooth manifold we undertand a finite-dimensional, connected, second-countable, Hausdorff manifold endowed with a $\CC^{\infty}$-differentiable structure. If $f:M\rightarrow N$ is a differentiable map between smooth manifolds, we write $(df)_x:T_xM\rightarrow T_{f(x)}N$ for its derivative at $x\in M$, where $T_xM$ is the tangent space at $x\in M$ and $T_{f(x)}N$ the tangent space at $f(x)\in N$. When we do not need to specify the point $x\in M$ we say only that $f_*$ is the derivative of $f$.	

A Lie group $G$ will be a group endowed with the structure of a smooth manifold. If $G$ is a Lie group, we write $\mathrm{Aut}(G)$ for the groups of automorphisms of $G$ and $\mathfrak{X}(G)$ for the set of $\CC^{\infty}$ vector fields on $G$.  By $e$ we denote the identity element of $G$ and by $i$ the inversion of $G$, that maps $g\in G$ into its inverse $g^{-1}\in G$. For any given $g\in G$ we denote by $L_g, R_g$ and $C_g$ the left translation, right translation and the conjugation by $g$, respectively. The image of the exponential map $\exp:\fg\rightarrow G$ is denoted by $\exp(X)$ or by $\rme^{X}$. The Lie algebra $\fg$ of $G$ will always be identified with the set of right invariant vector fields on $G$. 

\subsection{Control systems}

A control system on a smooth manifold $M$ is given by the family
\begin{flalign*}
&&\dot{x}(t)=f^0(x(t))+\sum_{j=1}^m\omega_j(t)f^j(x(t)), \;\;\omega=(\omega_1, \ldots, \omega_m)\in\UC, &&\hspace{-1cm}\left(\Sigma \right)
\end{flalign*}
of ordinary differential equations. Here $f^0, f^1,\ldots, f^m$ are smooth vector fields on $M$. $f^0$ is called the
{\it drift vector field} and $f^1, \ldots, f^m$ the {\it control vector fields}. The set of {\it admissible control functions} $\UC$ is given by the set of piecewise constant functions $\omega:\R\rightarrow\R^m$.

For each $\omega\in\UC$, the corresponding differential equation $\Sigma$ has a unique solution $\varphi(t, x, u)$ with
initial value $x = \varphi(0, x, u)$. The systems considered in this paper all have globally defined
solutions, which give rise to a map
$$\varphi : \R\times M \times\UC\rightarrow M, \;\;(t, x, \omega)\mapsto \varphi(t, x, \omega),$$
called the {\it transition map} of the system. We also use the notation $\varphi_{t, \omega}$ for the map $\varphi_{t, \omega}: M \rightarrow M$ given by $x\mapsto \varphi_{t, \omega}(x):=\varphi(t, x, \omega)$. Since $f^0, f^1, \ldots, f^m$ are smooth, the map $\varphi_{t, \omega}$ is also smooth. The transition map $\varphi$ is a cocycle over the shift flow
$$\theta: \R \times \UC \rightarrow \UC, (t, \omega)\mapsto \theta\omega = \omega(\cdot + t),$$
i.e., it satisfies $\varphi(t+s, x, \omega) = \varphi(s, \varphi(t, x, \omega), \theta_t \omega)$ for all $t, s\in \R, x \in M$ and $\omega\in \UC$. Moreover, it holds that $\varphi_{t, \omega}^{-1}=\varphi_{-t, \theta_{t}\omega}$ and, for all $t_1, t_2>0$ and $\omega_1, \omega_2\in\UC$ 
$$\varphi(t_1, \varphi(t_2, x, \omega_2), \omega_1) =\varphi(t+s, x, \omega), \;\;\;\mbox{ where  }\;\;\;\omega(\tau) =\left\{\begin{array}{c}
\omega_1(\tau)\mbox{ for }\tau \in[0, s]\\
\omega_2(\tau - s) \mbox{ for }\tau\in [s, t + s]
\end{array}\right.$$

For $x\in M$ and $\tau >0$ we write 
$$
\mathcal{R}_{\leq\tau}(x)  :=\left \{  \varphi(t ,x ,\omega);\; t\in[0, \tau] \mbox{ and }  \omega
\in \mathcal{U}\right \}  \;\;\;\;\mbox{ and }\;\;\;\;\mathcal{R}(x):=\bigcup_{\tau
>0}\mathcal{R}_{\leq\tau}(x).
$$
for the {\it set of points reachable from $x\in M$ up to time $\tau$} and the {\it reachable set from $x$}, respectively. Analogously, we define the {\it set of points controllable to $x$ within time $\tau$} and the {\it controllable set of $x$} respectively by 
$$
\mathcal{R}^*_{\leq\tau}(x):=\left\{y\in M; \exists  t\in[0, \tau], \omega\in \mathcal{U} \mbox{ with } \varphi(t, y ,\omega)=x\right\} \;\;\;\mbox{ and }\;\;\;\mathcal{R}^*(x):=\bigcup_{\tau >0}\mathcal{R}^*_{\leq\tau}(x).
$$

The system $\Sigma$ is said to be {\it locally controllable at } $x$ if $x\in\inner\RC(x)$. In the analytic case, it follows from Theorem 3.1 of \cite{Suss0} that $\Sigma$ is locally controllable at $x$ if $x\in\inner\RC(x)\cap\inner\RC^*(x)$. In particular, that is the case for the systems on Lie groups under consideration in this paper. The system $\Sigma$ is said to be {\it controllable in $X\subset M$} if for all $x, y\in X$ there exists $\tau>0$ and $\omega\in\UC$ such that $y=\varphi(\tau, x, \omega). $ Equivalently, the system is controllable in $X\subset M$ if $X\subset\RC(x)\cap\RC^*(x)$ for some (and hence for all) $x\in X$.
	 
\begin{remark}
	It is worth to mention that the problem or characterizing local controllability was studied by many authors (see for instance Hermes \cite{HH1}, \cite{HH2} Sussmann \cite{Suss1}, \cite{Suss2}, \cite{Suss3} Bianchini and Stefani \cite{BiGS}). Necessary and sufficient conditions for local controllability are expressible in terms of $X\in \LC$, where $\LC=\LC(f^0, f^1, \ldots, f^m)$ denote the smallest Lie algebra of vector fields on $M$ containing $f^0, f^1,\ldots, f^m$. Indeed
		all the papers above given sufficient conditions for local reachability.
\end{remark}

\begin{remark}
	The choice of the set of admissible control functions being piecewise constant is not restrictive. In fact, most of the usual choices of admissible functions are such that the solutions of $\Sigma$ can be approximated by using piecewise constant ones.
\end{remark}	 
	
	\subsection{Affine and linear vector fields}
	
	In this section we define affine and linear vector fields and state their main properties. For the proof of the assertions in this section the reader can consult \cite{AyTi},  \cite{Jouan1} and \cite{Jouan2}.
	
	Let $G$ be a connected Lie group with Lie algebra $\mathfrak{g}$. Following \cite{AyTi}, the {\it normalizer} of $\mathfrak{g}$ is the set
	$$\eta:=\{F\in \  \mathfrak{X}(G);\, \mbox{ for all }Y\in \mathfrak{g},\; \;[F,Y]\in \mathfrak{g}\}.$$

A vector field $F$ on $G$ is said to be	{\it affine} if it belongs to $\eta$. If $F\in\eta$ and $F(e)=0$ the vector field $F$ is said to be {\it linear}. Any affine vector field $F$ is uniquely decomposed as $F=\mathcal{X}+Y$ where $\mathcal{X}$ is linear and $Y$ is right invariant. Moreover, any $F\in\eta$ is complete, any linear vector field $\XC$ is an infinitesimal automorphism, that is, its flow in 1-parameter subgroup of $\mathrm{Aut}(G)$, and if $\{\alpha_t\}_{t\in\R}$ and $\{\psi_t\}_{t\in\R}$ stand, respectively, for the flow of $F$ and $\XC$, where $F=\mathcal{X}+Y$, we have that 
\begin{equation}
\label{expressionslinear}%
\alpha_{t}(g)=L_{\alpha_{t}(e)}(\psi_{t}(g)), \;\;\mbox{ for all }\;\;g\in G.
\end{equation}

\bigskip

The next technical lemma shows that expression (\ref{expressionslinear}) can be generalized for finite composition of flows of affine vector fields. Such result will be needed ahead.

\begin{lemma}
\label{compositionaffine} Let $\{F_{i}\}_{i\in \mathbb{N}}$ be a family of
affine vector fields with decomposition $F_{i}=\mathcal{X}_{i}+Y_{i}$. Where
$\mathcal{X}_{i}$ is linear and $Y_{i}$ is right-invariant, for any
$i\in \mathbb{N}$. Denote by $\{ \alpha_{t}^{i}\}_{t\in \mathbb{R}}$ and $\{
\psi_{t}^{i}\}_{t\in \mathbb{R}}$ the flows of $F_{i}$ and $\mathcal{X}_{i}$
respectively. For any $i_{1},\ldots,i_{n}\in \mathbb{N}$ and any
real numbers $\tau_{1},\cdots,\tau_{n}$, it holds that
\begin{equation}
\label{composition}
\alpha_{\tau_{n}}^{i_{n}}\circ \cdots \circ \alpha_{\tau_{1}}^{i_{1}}%
=L_{\alpha_{\tau_{n}}^{i_{n}}\left(  \cdots \left(  \alpha_{\tau_{1}}^{i_{1}%
}(e)\right)  \cdots \right)  }\circ \psi_{\tau_{n}}^{i_{n}}\circ \cdots \circ
\psi_{\tau_{1}}^{i_{1}}.
\end{equation}

\end{lemma}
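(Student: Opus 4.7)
The plan is to prove (\ref{composition}) by induction on $n$. The base case $n=1$ is precisely equation (\ref{expressionslinear}), which was stated just before the lemma.

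For the inductive step, assume the identity holds for some $n-1 \geq 1$ and write $g_{n-1} := \alpha_{\tau_{n-1}}^{i_{n-1}}(\cdots(\alpha_{\tau_1}^{i_1}(e))\cdots)$ so that by hypothesis
\begin{equation*}
\alpha_{\tau_{n-1}}^{i_{n-1}} \circ \cdots \circ \alpha_{\tau_1}^{i_1} = L_{g_{n-1}} \circ \psi_{\tau_{n-1}}^{i_{n-1}} \circ \cdots \circ \psi_{\tau_1}^{i_1}.
\end{equation*}
Composing on the left with $\alpha_{\tau_n}^{i_n}$, the goal reduces to proving the identity
\begin{equation*}
\alpha_{\tau_n}^{i_n} \circ L_{g_{n-1}} = L_{g_n} \circ \psi_{\tau_n}^{i_n},
\end{equation*}
where $g_n := \alpha_{\tau_n}^{i_n}(g_{n-1})$.

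To see this, I would evaluate both sides at an arbitrary $h \in G$ and use two facts: first, (\ref{expressionslinear}) applied to $F_{i_n}$ gives $\alpha_{\tau_n}^{i_n}(g) = \alpha_{\tau_n}^{i_n}(e) \cdot \psi_{\tau_n}^{i_n}(g)$ for all $g \in G$; second, since $\XC_{i_n}$ is linear, $\psi_{\tau_n}^{i_n}$ is an automorphism of $G$ (as noted in the paragraph preceding the lemma, it is an element of the 1-parameter subgroup of $\mathrm{Aut}(G)$ generated by $\XC_{i_n}$). Therefore
\begin{equation*}
\alpha_{\tau_n}^{i_n}(g_{n-1} h) = \alpha_{\tau_n}^{i_n}(e) \cdot \psi_{\tau_n}^{i_n}(g_{n-1}) \cdot \psi_{\tau_n}^{i_n}(h) = \alpha_{\tau_n}^{i_n}(g_{n-1}) \cdot \psi_{\tau_n}^{i_n}(h) = g_n \cdot \psi_{\tau_n}^{i_n}(h),
\end{equation*}
which is the desired identity. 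Substituting back yields (\ref{composition}) and closes the induction.

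There is no real obstacle here; the only thing to be careful about is the bookkeeping, specifically recognizing that the point $\alpha_{\tau_n}^{i_n}(e)\cdot\psi_{\tau_n}^{i_n}(g_{n-1})$ collapses, by (\ref{expressionslinear}) again, back to $\alpha_{\tau_n}^{i_n}(g_{n-1}) = g_n$. The crucial structural ingredient is the multiplicativity of $\psi_{\tau_n}^{i_n}$, which is what allows a left translation to be pushed past the linear flow at the cost of translating the base point by the linear flow.
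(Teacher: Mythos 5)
Your proof is correct and follows essentially the same route as the paper: induction on $n$, with the inductive step carried by decomposing $\alpha_{\tau_n}^{i_n}$ via (\ref{expressionslinear}) and then using the automorphism property of $\psi_{\tau_n}^{i_n}$ (which the paper phrases as the operator identity $f\circ L_g=L_{f(g)}\circ f$ for $f\in\mathrm{Aut}(G)$, and you phrase as pointwise multiplicativity) before collapsing $\alpha_{\tau_n}^{i_n}(e)\cdot\psi_{\tau_n}^{i_n}(g_{n-1})$ back to $\alpha_{\tau_n}^{i_n}(g_{n-1})$ by a second application of (\ref{expressionslinear}). No gaps.
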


\begin{proof}
Our proof is by induction. For $n=1$ such equation coincides with
(\ref{expressionslinear}) and the result holds. Let us consider $i_{1}%
,\ldots,i_{n+1}\in \mathbb{N}$, $\tau_{1},\cdots,\tau_{n+1}$ and by the
hypothesis of induction assume that
\[
\alpha_{\tau_{n}}^{i_{n}}\circ \cdots \circ \alpha_{\tau_{1}}^{i_{1}}%
=L_{\alpha_{\tau_{n}}^{i_{n}}\left(  \cdots \left(  \alpha_{\tau_{1}}^{i_{1}%
}(e)\right)  \cdots \right)  }\circ \psi_{\tau_{n}}^{i_{n}}\circ \cdots \circ
\psi_{\tau_{1}}^{i_{1}}%
\]
holds. Hence,
\[
\alpha_{\tau_{n+1}}^{i_{n+1}}\circ \alpha_{\tau_{n}}^{i_{n}}\circ \cdots
\circ \alpha_{\tau_{1}}^{i_{1}}=\alpha_{\tau_{n+1}}^{i_{n+1}}\circ
L_{\alpha_{\tau_{n}}^{i_{n}}\left(  \cdots \left(  \alpha_{\tau_{1}}^{i_{1}%
}(e)\right)  \cdots \right)  }\circ \psi_{\tau_{n}}^{i_{n}}\circ \cdots \circ
\psi_{\tau_{1}}^{i_{1}}%
\]%
\[
=L_{\alpha_{i_{n+1}}^{\tau_{n+1}}(e)}\circ \psi_{\tau_{n+1}}^{i_{n+1}}\circ
L_{\alpha_{\tau_{n}}^{i_{n}}\left(  \cdots \left(  \alpha_{\tau_{1}}^{i_{1}%
}(e)\right)  \cdots \right)  }\circ \psi_{\tau_{n}}^{i_{n}}\circ \cdots \circ
\psi_{\tau_{1}}^{i_{1}}.
\]
However, for any $f\in \mathrm{Aut}(G)$ and $g\in G$ it follows that $f\circ
L_{g}=L_{f(g)}\circ f$ . So, we get
\[
L_{\alpha_{i_{n+1}}^{\tau_{n+1}}(e)}\circ \psi_{\tau_{n+1}}^{i_{n+1}}\circ
L_{\alpha_{\tau_{n}}^{i_{n}}\left(  \cdots \left(  \alpha_{\tau_{1}}^{i_{1}%
}(e)\right)  \cdots \right)  }=L_{\alpha_{i_{n+1}}^{\tau_{n+1}}(e)}\circ
L_{\psi_{\tau_{n+1}}^{i_{n+1}}\left(  \alpha_{\tau_{n}}^{i_{n}}\left(
\cdots \left(  \alpha_{\tau_{1}}^{i_{1}}(e)\right)  \cdots \right)  \right)
}\circ \psi_{\tau_{n+1}}^{i_{n+1}}%
\]%
\[
=L_{\alpha_{i_{n+1}}^{\tau_{n+1}}(e)\cdot \psi_{\tau_{n+1}}^{i_{n+1}}\left(
\alpha_{\tau_{n}}^{i_{n}}\left(  \cdots \left(  \alpha_{\tau_{1}}^{i_{1}%
}(e)\right)  \cdots \right)  \right)  }\circ \psi_{\tau_{n+1}}^{i_{n+1}%
}=L_{\alpha_{i_{n+1}}^{\tau_{n+1}}\left(  \alpha_{\tau_{n}}^{i_{n}}\left(
\cdots \left(  \alpha_{\tau_{1}}^{i_{1}}(e)\right)  \cdots \right)  \right)
}\circ \psi_{\tau_{n+1}}^{i_{n+1}}%
\]
which implies that
\[
\alpha_{\tau_{n+1}}^{i_{n+1}}\circ \alpha_{\tau_{n}}^{i_{n}}\circ \cdots
\circ \alpha_{\tau_{1}}^{i_{1}}=L_{\alpha_{i_{n+1}}^{\tau_{n+1}}\left(
\alpha_{\tau_{n}}^{i_{n}}\left(  \cdots \left(  \alpha_{\tau_{1}}^{i_{1}%
}(e)\right)  \cdots \right)  \right)  }\circ \psi_{\tau_{n+1}}^{i_{n+1}}%
\circ \psi_{\tau_{n}}^{i_{n}}\circ \cdots \circ \psi_{\tau_{1}}^{i_{1}}%
\]
ending the proof.
\end{proof}

We finish this section by commenting on the special connection between $\fg$-derivation and linear vector fields. Let $\XC$ be a linear vector field on $G$. Associate to $\XC$ there is a $\mathfrak{g}$-derivation $\mathcal{D}:\fg\rightarrow\fg$ given by 
$$\mathcal{D}Y=-[\mathcal{X},Y],\mbox{ for all }Y\in
\mathfrak{g}.$$
The flow of $\XC$ is related to $\DC$ by 
\begin{equation}
\label{flow}
(d\psi_{t})_{e}=\mathrm{e}^{t\mathcal{D}}\; \; \; \mbox{ and consequently  }\; \; \;
\psi_{t}(\exp Y)=\exp(\mathrm{e}^{t\mathcal{D}}Y), \;\;\;\;\;\mbox{ for any } t\in\R, Y\in\fg.
\end{equation}

A special case is when the derivation $\DC$ is inner, that is, there is $X\in\fg$ such that $\DC=\ad(X)$. Following \cite{Jouan2}, when this happens the linear vector field decomposes as $\mathcal{X}=Y+i_*Y$ and its flow satisfies $\varphi_t=C_{\rme^{tX}}$. In particular, when $G$ is a semisimple Lie group any linear vector field is of this form since any $\fg$-derivation is inner.

For compact Lie groups, Theorem 4.29 of \cite{Knapp} implies that $G=G_{\mathrm{ss}}Z(G)_0$ where $Z(G)_0$ is the connected component of the center of $G$ and $G_{\mathrm{ss}}$ is a semisimple connected subgroup of $G$ with Lie algebra $[\fg, \fg]$. Since these subgroups are invariant by automorphisms, we have that the flow $\{\varphi_{t}\}_{t\in\R}$ of any linear vector field $\XC$ restricts to automorphisms of both, $G_{\mathrm{ss}}$ and $Z(G)_0$. Moreover, since $Z(G)_0$ is a torus we have that $\mathrm{Aut}(Z(G)_0)$ is discrete which by continuity implies that $\psi_t|_{Z(G)_0}=\operatorname{id}_{Z(G)_0}$. On the other hand, since $G_{\mathrm{ss}}$ is semisimple, we have that $\psi_t|_{G_{\mathrm{ss}}}=\rme^{tX}$ for some $X\in [\fg, \fg]$. Therefore, if $d$ is a bi-invariant metric $d$ on $G$ we have that $\psi_t$ is an isometry of $G$, for any $t\in\R$. 

We will finish this section with some examples of affine and linear vector fields.

\begin{example}
	
			Let $G$ to be the connected component of the identity of $\mathrm{Gl}(n, \R)$, the group of the invertible $n\times n$-matrices.Its Lie algebra $\fg$ is given by $\mathfrak{gl}(n, \R)$, the set of all $n\times n$-matrices.

			For any $A\in\fg$, the vector field $\XC_A(g):=Ag-gA, \;\;g\in G$ is linear vector. Its associated flow is given by $\varphi_t(g)=C_{\rme^{tA}}(g)$ showing that the associated derivation is inner and given by $\DC=-\ad(A)$. If $B$ is another element in $\fg$ and we consider the left invariant vector field $B(g)=gB$, we have that $F=\XC_A+B$ is an affine vector field. Moreover, it holds that
			$$F(g)=\XC_A(g)+B(g)=Ag-gA+gB=Ag-gC, \;\;\;\mbox{ where }\;\;C=A-B.$$ 
			Reciprocally, any affine vector field $F$ whose associated linear vector field has inner derivation is of the form $F(g)=Ag-gB$ for matrices $A, B\in\fg$.
\end{example}

Following Theorem 2.2 of \cite{AyTi}, for simple connected Lie groups any linear vector field is determined by its derivation. Therefore, one cannot expect that all the affine vector fields of the previous example to be of the form $F(g)=Ag-gB$. The next example gives an example of a linear vector field whose associated derivation is not inner.

\begin{example}
	Let 
	$$G=\left\{\left(\begin{array}{ccc}
	1 & a & b\\ 0 & 1 & c\\ 0& 0& 1
	\end{array}\right), \;(a, b, c)\in\R^3\right\}$$ 
	be the Heisenberg group. Its Lie algebra $\fg$ is generated by 
	$$X=\left(\begin{array}{ccc}
	0 & 1 & 0\\ 0 & 0 & 0\\ 0& 0& 0
	\end{array}\right), \;\;\;\;Y=\left(\begin{array}{ccc}
	0 & 0 & 0\\ 0 & 0 & 1\\ 0& 0& 0
	\end{array}\right)\;\;\mbox{ and }\;\;Z=\left(\begin{array}{ccc}
	0 & 0 & 1\\ 0 & 0 & 0\\ 0& 0& 0
	\end{array}\right),$$
	where $[X, Y]=Z$ and $[X, Z]=[Y, Z]=0$. By denoting the elements of $G$ and $\fg$ only by its coordinates on the basis $\{X, Y, Z\}$ we have that the vector field $\XC(a, b, c)=(a, b, 2c)$ is linear. In fact, a simple calculation shows that its flow is given by $\varphi_t(a, b, c)=(a\rme^t, b\rme^t, c\rme^{2t})$ and also that 
	$$\varphi_t((a_1, b_1, c_1)(a_2, b_2, c_2))=\varphi_t(a_1+a_2, b_1+b_2, c_1+c_2+a_1b_2)$$
	$$=((a_1+a_2)\rme^t, (b_1+b_2)\rme^t, (c_1+c_2+a_1b_2)\rme^{2t})=(a_1\rme^t, b_1\rme^t, c_1\rme^{2t})(a_2\rme^t, b_2\rme^t, c_2\rme^{2t})=$$ 
	$$=\varphi_t(a_1, b_1, c_1)\varphi_t(a_2, b_2, c_2)$$
	showing that $\{\varphi_t\}_{t\in\R}$ is a one-parameter group of automorphisms and hence that $\XC$ is linear.
	
	The derivation associated with $\XC$ on the above basis is given by $\DC(a, b, c)=(a, b, 2c)$ and is therefore not inner, since $\ad(W)Z=0$ for any $W\in\fg$ while $\DC Z=\DC(0, 0, 1)=(0, 0, 2)$. 	
\end{example}

\section{Bilinear systems on Lie groups}

Bilinear systems on Euclidean spaces are well studied in the literature (see for instance \cite{CK} and
\cite{Elliot}). In this section we extend the definition of such systems to connected Lie groups and establish their main properties. In particular we show that controllability of bilinear system on Lie groups are a quite rare condition and can only be expected in Euclidean spaces.

A \textbf{bilinear} system on a Lie group $G$ is given by
\begin{flalign*}
	&&\dot{g}(t)=\XC^0(g(t))+\sum_{j=1}^m\omega_j(t)\XC^j(g(t)), &&\hspace{-1cm}\left(\Sigma_{B}\right)
	\end{flalign*}
	where $\mathcal{X}^{0},\mathcal{X}^{1},\ldots,\mathcal{X}^{m} $ are linear vector fields on $G$. The transition map of $\Sigma_B$ will be denoted by $\varphi^B$ and the diffeomorphism $g\in G\mapsto \varphi^{B}(t, g, \omega)$ by $\varphi^B_{t, \omega}$, where $t\in \R$ and $\omega\in\UC$. Moreover, we denote by $\mathcal{D}^{j}$ the $\mathfrak{g}$-derivation associated with the linear vector field $\mathcal{X}^{j}$, for $j=0, \ldots, m$. 

Our intention in what follows is to obtain an expression for the solutions of $\Sigma_B$. In order to do that we consider, for any
$u=(u_{1},\ldots,u_{m})\in \mathbb{R}^{m}$, the linear vector field
$$\mathcal{X}_{u}=\mathcal{X}^{0}+\sum_{j=1}^{m}u_{j}\mathcal{X}^{j} \;\;\;\mbox{ with associated flow }\;\;\;\{ \psi_{t}^{u}\}_{t\in \mathbb{R}}\subset\mathrm{Aut}(G).$$
It is straightforward to see that the associated derivation $\DC_u$ is given by $\mathcal{D}_{u}=\mathcal{D}^{0}+\sum_{j=1}^{m}u_{j}\mathcal{D}^{j}$.

The next result gives an expression for the solutions of $\Sigma_{B}$ in terms of concatenation of linear flows.

\begin{theorem}
\label{bilinear} 
Let $\Sigma_{B}$ be a bilinear control system on $G$ and consider $\omega\in\UC$. For a given $T>0$ write
$$\omega(t)=\omega_i \mbox{ for } \text{ and }t\in \left(  \sum_{j=0}^{i-1}t_{j},\sum
_{j=0}^{i}t_{j}\right],$$
where $t_{1},\ldots,t_{n}>t_{0}=0$, $T=\sum_{i=1}^nt_i$ and $\omega_{1},\ldots,\omega_{m}\in \mathbb{R}^{m}$. Then,
\begin{equation}
\label{solutionbilinear}
\varphi^{B}(t, g, \omega)=\psi_{t\text{ }-\sum_{j=1}^{i-1}t_{j}}^{\omega_{i}%
}(\psi_{t_{i-1}}^{\omega_{i\text{ }-1}}(\cdots(\psi_{t_{1}}^{\omega_{1}%
}(g))\cdots)),\; \; \;t\in \left(  \sum_{j=0}^{i-1}t_{j},\sum_{j=0}^{i}%
t_{j}\right]  .%
\end{equation}
Moreover, the solutions of $\  \Sigma_{B}$ are complete and $\varphi_{t,\omega
}^{B}\in \mathrm{Aut}(G)$ for any $t\in \mathbb{R}$ and $\omega \in \mathcal{U}$.
\end{theorem}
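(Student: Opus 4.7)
My plan is a straightforward induction on the number $n$ of pieces of the piecewise constant control $\omega$, using the cocycle property of the transition map and the fact that on each constancy interval the ODE reduces to the autonomous equation driven by a single linear vector field.

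First I would observe that on the interval $I_i := \left(\sum_{j=0}^{i-1} t_j, \sum_{j=0}^{i} t_j\right]$ the control is identically equal to the constant vector $\omega_i \in \mathbb{R}^m$, so the equation $\Sigma_B$ becomes
\[
\dot{g}(t) = \mathcal{X}^0(g(t)) + \sum_{j=1}^m (\omega_i)_j \mathcal{X}^j(g(t)) = \mathcal{X}_{\omega_i}(g(t)),
\]
whose (globally defined) flow is precisely $\{\psi_t^{\omega_i}\}_{t \in \mathbb{R}} \subset \mathrm{Aut}(G)$, since $\mathcal{X}_{\omega_i}$ is a linear vector field and, as recalled in Section 2.3, any linear vector field is an infinitesimal automorphism with flow defined for all time. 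This immediately gives the base case $i=1$: for $t \in (0, t_1]$ the unique solution with initial condition $g$ is $\varphi^B(t,g,\omega) = \psi_t^{\omega_1}(g)$.

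For the inductive step, suppose the formula (\ref{solutionbilinear}) holds on $I_1 \cup \cdots \cup I_i$, so in particular at time $\tau_i := \sum_{j=0}^i t_j$ the state is
\[
\varphi^B(\tau_i, g, \omega) = \psi_{t_i}^{\omega_i}\bigl(\psi_{t_{i-1}}^{\omega_{i-1}}(\cdots(\psi_{t_1}^{\omega_1}(g))\cdots)\bigr).
\]
Applying the cocycle identity $\varphi^B(t, g, \omega) = \varphi^B(t - \tau_i, \varphi^B(\tau_i, g, \omega), \theta_{\tau_i}\omega)$ for $t \in I_{i+1}$, and noting that the shifted control $\theta_{\tau_i}\omega$ is constantly equal to $\omega_{i+1}$ on $[0, t - \tau_i]$, reduces the evolution on $I_{i+1}$ to the autonomous flow $\psi_{t-\tau_i}^{\omega_{i+1}}$ applied to the state at time $\tau_i$, yielding (\ref{solutionbilinear}) for one more piece. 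This closes the induction.

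Completeness of the solutions is then automatic: each factor $\psi_{\tau}^{\omega_i}$ in the concatenation is defined for every $\tau \in \mathbb{R}$, and any $T \in \mathbb{R}$ (positive or negative) is reached by a finite concatenation since $\omega$ is piecewise constant. Finally, $\varphi^B_{t,\omega}$ is a composition of finitely many elements of $\mathrm{Aut}(G)$, hence itself an automorphism; for $t \le 0$ one uses $\varphi^B_{t,\omega} = (\varphi^B_{-t, \theta_t \omega})^{-1}$ together with the fact that $\mathrm{Aut}(G)$ is a group. I do not anticipate any serious obstacle here; the only point requiring mild care is matching the half-open intervals $I_i$ so that the concatenation formula is consistent at the switching times $\tau_i$, but this is forced by the requirement that solutions be continuous and the cocycle property holds.
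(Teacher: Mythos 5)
Your proof is correct. It differs from the paper's in mechanism, though not in substance: the paper writes down the right-hand side of (\ref{solutionbilinear}) as a candidate curve $\alpha(t)$, checks $\alpha(0)=g$ and continuity, differentiates it on each constancy interval via the flow equation $\frac{d}{ds}\psi^{\omega_i}_s(h)=\mathcal{X}_{\omega_i}(\psi^{\omega_i}_s(h))$ to see that it satisfies $\Sigma_B$, and concludes by uniqueness of solutions; you instead build the solution forward by induction on the number of pieces, invoking the cocycle identity $\varphi^B(t,g,\omega)=\varphi^B(t-\tau_i,\varphi^B(\tau_i,g,\omega),\theta_{\tau_i}\omega)$ to reduce each new interval to the autonomous flow of $\mathcal{X}_{\omega_{i+1}}$. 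The two arguments are logically equivalent (the cocycle property is itself a consequence of uniqueness, which the paper states in Section 2.2 as a general property of transition maps, so there is no circularity), but your version avoids any explicit differentiation of the concatenated expression and makes the matching at the switching times $\tau_i$ automatic, while the paper's version is self-contained in that it only uses the definition of a flow. Your treatment of completeness and of the automorphism property, including the passage to negative times via $\varphi^B_{t,\omega}=(\varphi^B_{-t,\theta_t\omega})^{-1}$, matches the paper's.
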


\begin{proof}
Let us consider $\alpha(t)$ as the curve define by the right hand side of
equation (\ref{solutionbilinear}), that is,
\[
\alpha(t):=\psi_{t\text{ }-\sum_{j=1}^{i-1}t_{j}}^{\omega_{i}}(\psi_{t_{i-1}%
}^{\omega_{i-1}}(\cdots(\psi_{t_{1}}^{\omega_{1}}(g))\cdots)),\; \;
\;t\in \left(  \sum_{j=0}^{i-1}t_{j},\sum_{j=0}^{i}t_{j}\right]  .
\]

We know that $\alpha(0)=x$ and $\alpha$ is continuous since it is given by the
concatenations of linear flows. By the very definition of flow
\[
\frac{d}{ds}\psi_{s}^{\omega_{i}}(h)=\mathcal{X}_{\omega_{i}}(\psi_{s}%
^{\omega_{i}}(h)),\; \; \mbox{for any }h\in G,\text{ }s\in \mathbb{R}.
\]
By considering
\[
h=\psi_{-\sum_{j-1}^{i-1}t_{j}}^{\omega_{i}}(\psi_{t_{i-1}}%
^{\omega_{i-1}}(\cdots(\psi_{t_{1}}^{\omega_{1}}(g))\cdots))
\]
we get
\[
\alpha^{\prime}(t)=\frac{d}{dt}\psi_{t}^{\omega_{i}}(h)=\mathcal{X}%
_{\omega_{i}}\left(  \psi_{t}^{\omega_{i}}(h)\right)  =\mathcal{X}_{\omega
(t)}\left(  \alpha(t)\right), \;\;t\in \left(  \sum_{j=0}^{i-1}t_{j},\sum_{j=0}^{i}%
t_{j}\right]
\]
which shows that $\alpha(t)$ is in fact the solution of $\Sigma_{B}$
associated with the control $\omega$ and starting at $g\in G$ . From the
uniqueness of the solution we get $\alpha(t)=\varphi^{B}(t, g, \omega)$ proving
the equality in equation (\ref{solutionbilinear}).

The assertion about the completeness of the $\Sigma_{B}$-solutions follows
directly from the relation $\varphi_{-t,\omega}^{B}=\left(  \varphi
_{t,\theta_{-t}\omega}^{B}\right)  ^{-1}.$ Finally, $\varphi_{t, \omega}^{B}%
\in \mathrm{Aut}(G),$ for any $t\in\R$ and $\omega\in\UC$ since it is the concatenation of $G$-automorphisms.
\end{proof}

\begin{remark}
	It is not hard to show that a similar expression is also possible for the negative time solutions.
\end{remark}

Using the relation between the linear flow and it associated derivation we are able to give an expression for the differential of the
solutions of $\Sigma_{B}$ in terms of exponential of matrices, as follows:

\begin{corollary}
\label{solutionbilinearexp} In the conditions of Theorem \ref{bilinear}, for any
$X\in \mathfrak{g}$ and $t\in \left(  \sum_{j=0}^{i-1}t_{j},\sum_{j=0}^{i}t_{j}\right]$ it holds that
\[
\varphi_{t,\omega}^{B}(\exp(X))=\exp \left(  \mathrm{e}^{\left(t-\sum
_{j=1}^{i}t_{j}\right)\mathcal{D}_{\omega_{i}}}\mathrm{e}^{t_{i-1}\mathcal{D}%
_{\omega_{i-1}}}\cdots \mathrm{e}^{t_{1}\mathcal{D}_{\omega_{1}}}X\right),
\]
where $\mathcal{D}_{\omega_{i}}$ is the $\mathfrak{g}$-derivation induced by
the linear vector field $\mathcal{X}_{\omega_{i}}$, for $i=1, \ldots, n$. Moreover,
$$(d\varphi_{t,\omega}^{B})_{e}X=\mathrm{e}^{\left(t-\sum_{j=1}^{i}%
	t_{j}\right)\mathcal{D}_{\omega_{i}}}\mathrm{e}^{t_{i-1}\mathcal{D}_{\omega_{i-1}}%
}\cdots \mathrm{e}^{t_{1}\mathcal{D}_{\omega_{1}}}X.$$
\end{corollary}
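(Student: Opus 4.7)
The plan is to start from the explicit formula for $\varphi^B(t,g,\omega)$ given by Theorem \ref{bilinear} and then feed the exponential element $\exp(X)$ through the concatenation of automorphisms $\psi^{\omega_i}_{s}$, using (\ref{flow}) at each step to convert the action on $\exp(X)$ into a matrix exponential acting on $X$. The key algebraic fact is the standard intertwining property: for any $\phi\in\mathrm{Aut}(G)$ and any $Y\in\fg$ one has $\phi(\exp Y)=\exp((d\phi)_e Y)$. This is exactly what is encoded in the second half of (\ref{flow}) for each of the linear flows $\psi^{u}_s$.

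First I would fix $t\in\bigl(\sum_{j=0}^{i-1}t_j,\sum_{j=0}^i t_j\bigr]$ and apply Theorem \ref{bilinear} with $g=\exp(X)$. Then, proceeding from the innermost composition outwards, I would use (\ref{flow}) to compute
\[
\psi^{\omega_1}_{t_1}(\exp X)=\exp\bigl(\mathrm{e}^{t_1\mathcal{D}_{\omega_1}}X\bigr),
\]
and by a straightforward induction on the number of flows already applied,
\[
\psi^{\omega_{k}}_{t_{k}}\circ\cdots\circ\psi^{\omega_{1}}_{t_{1}}(\exp X)=\exp\bigl(\mathrm{e}^{t_{k}\mathcal{D}_{\omega_{k}}}\cdots\mathrm{e}^{t_{1}\mathcal{D}_{\omega_{1}}}X\bigr),
\]
for $k=1,\ldots,i-1$. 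Applying the last flow $\psi^{\omega_i}_{t-\sum_{j=1}^{i-1}t_j}$ and invoking (\ref{flow}) once more yields the claimed formula for $\varphi^B_{t,\omega}(\exp X)$.

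For the differential statement, two equally short routes are available. The cleanest is to note that Theorem \ref{bilinear} gives $\varphi^B_{t,\omega}\in\mathrm{Aut}(G)$ as a composition of the automorphisms $\psi^{\omega_i}_{t_i}$, so by the chain rule $(d\varphi^B_{t,\omega})_e$ is the composition of the individual differentials $(d\psi^{\omega_i}_{t_i})_e$, each of which equals $\mathrm{e}^{t_i\mathcal{D}_{\omega_i}}$ by the first half of (\ref{flow}); multiplying them gives the desired product of exponentials. Alternatively, one can replace $X$ by $sX$ in the first identity, differentiate at $s=0$, and use $(d\exp)_0=\operatorname{id}_{\fg}$ to read off $(d\varphi^B_{t,\omega})_e X$.

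I do not expect a serious obstacle here; the argument is essentially bookkeeping once one has Theorem \ref{bilinear} and the two relations in (\ref{flow}). The only delicate point is the careful handling of the time variable in the final piece $\psi^{\omega_i}_{t-\sum_{j=1}^{i-1}t_j}$, so that the exponent $(t-\sum_{j=1}^{i-1}t_j)\mathcal{D}_{\omega_i}$ appears in the correct, outermost, position in the product of matrix exponentials.
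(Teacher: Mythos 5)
Your argument is correct and is essentially the paper's own proof: the first identity is obtained by feeding $\exp(X)$ through the concatenation formula of Theorem \ref{bilinear} using the relation $\psi_t(\exp Y)=\exp(\mathrm{e}^{t\mathcal{D}}Y)$ from (\ref{flow}), and the differential is computed exactly by your second route (replacing $X$ by $sX$ and differentiating at $s=0$). Your remark about the time variable in the last factor is apt: the exponent consistent with Theorem \ref{bilinear} is $\bigl(t-\sum_{j=1}^{i-1}t_j\bigr)\mathcal{D}_{\omega_i}$ as you write it, whereas the corollary as printed has the upper limit $i$, which appears to be a typo carried over in the paper.
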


\begin{proof}
	The first equation follows directly from equation (\ref{solutionbilinear}) in Theorem 3.1 and by the commutative relation given in (\ref{flow}) applied to $\psi_{s}^{\omega_{i}}$ and $\DC_{\omega_i}$.

Therefore, for $t\in \left(  \sum_{j=0}^{i-1}t_{j},\sum_{j=0}^{i}t_{j}\right]
$, we obtain
\[
\hspace{-2cm}(d\varphi_{t,\omega}^{B})_{e}X=\frac{d}{ds}\Bigl|_{s=0}\exp \left(
\mathrm{e}^{(t-\sum_{j=1}^{i}t_{j})\mathcal{D}_{\omega_{i}}}\mathrm{e}%
^{t_{i-1}\mathcal{D}_{\omega_{i-1}}}\cdots \mathrm{e}^{t_{1}\mathcal{D}%
	_{\omega_{1}}}sX\right)
\]%
\[
=\frac{d}{ds}\Bigl|_{s=0}\exp \,s\left(  \mathrm{e}^{(t-\sum_{j=1}^{i}%
	t_{j})\mathcal{D}_{\omega_{i}}}\mathrm{e}^{t_{i-1}\mathcal{D}_{\omega_{i-1}}%
}\cdots \mathrm{e}^{t_{1}\mathcal{D}_{\omega_{1}}}X\right)
\]%
\[
\hspace{-2cm}=\mathrm{e}^{(t-\sum_{j=1}^{i}t_{j})\mathcal{D}_{\omega_{i}}%
}\mathrm{e}^{t_{i-1}\mathcal{D}_{\omega_{i-1}}}\cdots \mathrm{e}^{t_{1}%
\mathcal{D}_{\omega_{1}}}X
\]
proving the second equation and concluding the proof.
\end{proof}

\bigskip

Before stating and proving the main result of this section let us consider the special case of bilinear systems whose associated derivations are inner. Let $\Sigma_B$ be a bilinear system on $G$ and assume that, for any $j=0,1,...,m$, there is $Y^{j}\in \mathfrak{g}$ such that the $\fg$-derivation $\DC^j$ associated with $\XC^j$ is given by $\mathcal{D}^{j}=\ad(Y^j)$. As discussed at the end of Section 2, when this is the case we get that $\mathcal{X}^{j}=Y^{j}+i_{\ast}Y^{j}$ and consequently the bilinear system $\Sigma_{B}$ can be decomposed as
\[
\mathcal{X}_{\omega(t)}(g(t))=Y_{\omega(t)}(g(t))+i_{\ast}\left(
Y_{\omega(t)}(g(t))\right)  .
\]
where $Y_{\omega(t)}(g(t))$ is the right-invariant control system
on $G$ defined by
\begin{flalign*}
&&\dot{g}(t)=Y_{\omega(t)}(g(t))=Y^{0}(g(t))+\sum_{j=1}^{m}\omega_{i}(t)Y^{j}(g(t)), &&\hspace{-1cm}\left(\Sigma_{I}\right).
\end{flalign*}

Since for any $u=(u_{1},\ldots,u_{m})$ the flow $\left\{\psi_{t}^{u}\right\}_{t\in \mathbb{R}}$ of $\mathcal{X}_{u}$ is given by $\psi
^{u}_{t}(g)=C_{\rme^{tY_u}}(g)$, for $Y_{u}=Y^{0}+\sum_{j=1}^{m}u_{j}Y^{j}$, we get that 
$$\varphi_{t,\omega}^{B}=C_{\varphi_{t,\omega}^{I}(e)} \;\;\mbox{ for any }t\in\R, \omega\in\UC$$
where $\varphi_{t,\omega}^{I}(e)=\varphi^I(t, e, \omega)$ is the solution of $\Sigma_I$ starting at $e\in G$.

\bigskip

Now we are able to enunciate and prove the main result concerning the controllability of bilinear systems.

\begin{theorem}
	\label{contbilinear}
Let  $\Sigma_{B}$ a bilinear system on $G$. If $G$ is not a simply connected Abelian Lie group, then $\Sigma_{B}$ cannot be controllable on $G\setminus\{e\}$.
\end{theorem}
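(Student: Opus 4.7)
The plan rests on Theorem \ref{bilinear}: every time-$t$ map $\varphi^{B}_{t,\omega}$ of a bilinear system is an automorphism of $G$, and therefore for any $x\in G$ the reachable set $\RC(x)$ lies inside the orbit $\mathrm{Aut}(G)\cdot x$. Since automorphisms fix $e$, controllability of $\Sigma_{B}$ on $G\setminus\{e\}$ would force $\mathrm{Aut}(G)$ to act transitively on $G\setminus\{e\}$. The converse is clear in the Euclidean case, where $\mathrm{Aut}(\R^{n})=\mathrm{Gl}(n,\R)$ acts transitively on $\R^{n}\setminus\{0\}$, so the whole statement reduces to the contrapositive: whenever $G\not\cong\R^{n}$, produce a proper nonempty $\mathrm{Aut}(G)$-invariant subset $S\subset G\setminus\{e\}$, since then no $\varphi^{B}_{t,\omega}$ can move a point of $S$ outside $S$.

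The first invariant to exploit is torsion. Every connected Lie group of positive dimension contains elements of infinite order --- an injective one-parameter subgroup provides one directly, while a periodic one contains points with irrational parameter in its image circle --- so the torsion set $\mathrm{Tor}(G)$ is $\mathrm{Aut}(G)$-invariant and is either equal to $\{e\}$ or strictly larger. If $G$ has any non-identity torsion element we are done with $S:=\mathrm{Tor}(G)\setminus\{e\}$. This already handles every nontrivial abelian case $G\cong\R^{a}\times\mathbb{T}^{b}$ with $b\geq 1$, leaving $G=\R^{n}$ as the only abelian survivor.

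It remains to handle $G$ torsion-free and non-abelian. Here I would use the two basic characteristic subgroups: if $Z(G)\neq\{e\}$, take $S:=Z(G)\setminus\{e\}$, which is proper because $G$ is non-abelian; if $Z(G)=\{e\}$ but $[G,G]\neq G$, take $S:=[G,G]\setminus\{e\}$, which is nonempty because $G$ is non-abelian. The delicate residual sub-case is $G$ torsion-free, centerless, and perfect, and this is where I expect the main obstacle to lie. I would eliminate it by first observing that torsion-freeness forces the maximal compact subgroup of $G$ to be trivial (whence $G$ is diffeomorphic to $\R^{\dim G}$ and in particular simply connected), and then applying the Levi decomposition $\fg=\fr\oplus\fs$: perfectness forces $\fs\neq 0$, and a simply connected Lie group whose Lie algebra is perfect with trivial algebraic center must either acquire a nontrivial topological center from the fundamental group of the adjoint form or else be compact (as for the exceptional $G_{2}$), both conclusions contradicting our running assumptions. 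Once this sub-case is disposed of, the rest is a clean invariance argument built on standard characteristic subgroups.
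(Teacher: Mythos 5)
Your reduction of the problem to the (non-)transitivity of $\mathrm{Aut}(G)$ on $G\setminus\{e\}$ is exactly the mechanism of the paper's proof, which then exhibits invariant sets case by case: the derived subgroup for solvable groups, the solvable radical in general, and, for semisimple groups (where every $\varphi^B_{t,\omega}$ is a conjugation), metric spheres in the compact case and the orthogonal-versus-symmetric behaviour of $\Ad$ in the noncompact case. Your torsion filter is a genuinely different and rather elegant first step --- it disposes of $\R^a\times\mathbb{T}^b$ with $b\geq1$ more transparently than the paper's case division does --- and the steps using $Z(G)$ and $[G,G]$ are sound.

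The gap is in the residual sub-case ($G$ torsion-free, centerless, perfect), precisely where you anticipated trouble, and the dichotomy you invoke to close it is false as stated. The complex simple group of type $G_2$ is simply connected, noncompact, and has trivial center (its fundamental group and its center both coincide with the trivial center of the compact form), so it acquires no ``topological center from the adjoint form'' and is not compact; the same happens for $\mathrm{SL}(2,\C)\ltimes\C^2$ with the standard action, whose Lie algebra is perfect and centerless and whose group center is trivial because $-I$ acts as $-\mathrm{id}$ on $\C^2$. These groups do not contradict the theorem --- both contain torsion and are caught by your first filter --- but they show that torsion-freeness must enter the residual argument in an essential way, and your sketch does not use it there. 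Making it work requires real effort: torsion-freeness forces every simple factor of the Levi subgroup to be $\widetilde{\mathrm{SL}(2,\R)}$ (the only noncompact simple group whose maximal compact subalgebra is abelian), and one must then argue that its infinite center cannot act faithfully on the radical because finite-dimensional representations of $\widetilde{\mathrm{SL}(2,\R)}$ factor through $\mathrm{SL}(2,\R)$. A much shorter repair is to replace $[G,G]$ by the solvable radical $R$, as the paper does: in the non-abelian torsion-free case, if $R=G$ then $G$ is solvable and $[G,G]\setminus\{e\}$ works; if $\{e\}\neq R\neq G$ then $R\setminus\{e\}$ works; and if $R=\{e\}$ then $G$ is semisimple, so $Z(G)=\{e\}$ gives $G\cong\Ad(G)$, a linear group whose maximal compact subgroup is positive-dimensional and hence contains torsion, contradicting torsion-freeness. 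As written, the proof is incomplete.
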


\begin{proof}
Let us divide the proof in cases:

\begin{itemize}
\item[$1.$] {\it $G$ is an Abelian compact Lie group:} As commented in the end of Section 2, the flow of any linear vector field is trivial. Since the solutions of $\Sigma_B$ is given by concatenations of flows of linear vector fields we must have that $\varphi_{t,\omega}^{B}=\operatorname{id}_G$ for any $t\in \mathbb{R}$ and $\omega \in \mathcal{U}$. Therefore $\Sigma_{B}$ cannot be controllable in $G\setminus\{e\}$. 

\item[$2.$] {\it $G$ is a solvable Lie group:} For this case, the
derivative subgroup $G^{\prime}\subset G$ is a nontrivial proper subgroup of $G$.
Since $G^{\prime}$ is invariant by automorphisms and $\varphi^B_{t, \omega}\in\mathrm{Aut}(G)$ for any $t\in\R$ and $\omega\in\UC$ we must have that 
$\varphi^B_{t, \omega}(G^{\prime})=G^{\prime}$ and therefore $\Sigma_{B}$ cannot be controllable in $G\setminus\{e\}$.

\item[$3.$] {\it $G$ is a semisimple Lie group:} Since derivations of semisimple Lie algebras are always inner, we have by the previous discussion that 
$$\varphi_{t,\omega}^{B}=C_{\varphi_{t,\omega}^{I}(e)},\; \; \mbox{ for
all }t\in \mathbb{R},\omega \in \mathcal{U}.$$
Therefore, if we prove that the conjugation does not acts transitively on $G\setminus\{e\}$
the bilinear system $\Sigma_{B}$ cannot be controllable in $G\setminus\{e\}$. We have then two
possibilities: 
\subitem3.1 {\it $G$ is a compact semisimple Lie group:} In this
case, $G$ admits a bi-invariant metric. In particular, any sphere centered at
$e\in G$ is invariant by conjugation, showing that the conjugation cannot be transitive.

\subitem3.2. {\it $G$ is a noncompact semisimple Lie group:} In this
situation, there exist $g, h\in G$ such that $\Ad(g)$ is
orthogonal and $\Ad(h)$ is symmetric for some inner product in
$\mathfrak{g}$ (see Chapter VI of \cite{Knapp}). Therefore, $\Ad(x)$ and
$\Ad(y)$ cannot be conjugated, which implies that the
conjugation cannot be transitive on $G$.

\item[$4.$] {\it $G$ is an arbitrary Lie group:} If the solvable radical $R$ of $G$ is nontrivial, the system cannot be controllable in $G\setminus\{e\}$ since $R$ is invariant by automorphisms. If $R=\{e\}$ the group is semisimple and such case was considered above.
\end{itemize}
\end{proof}

The previous theorem shows that controllability of bilinear systems on connected Lie groups
can only be expected for the classical bilinear systems on $\mathbb{R}^{n}.$  Actually, since in this particular case the group and the
algebra can be identified, and the normalizer coincides with the product between $\mathbb{R}^{n}$
and the Lie algebra $\mathfrak{gl}(n,\mathbb{R})$, any linear vector field $\XC=\XC^{\DC}$ on $\mathbb{R}^{n}$ can be directly associated with its linear map $\DC$. Thus, we obtain the classical bilinear system
\[
\dot{x}(t)=\mathcal{D}^{0}(x(t))+\sum_{j=1}^{m}\omega_{i}(t)\mathcal{D}%
^{j}(x(t)),\text{ }\omega \in \mathcal{U}.
\]

On the other hand, the class of bilinear systems plays a relevant role in the controllability property of affine systems as we will see in the forthcoming section.

\section{Affine systems on Lie groups}

The present section is devoted to analyze the general class of affine systems on Lie groups. As a matter of fact, we show
that there exists an intrinsic relation between the solutions of an affine system and its associated bilinear system. This relationship allows us to obtain
some preliminary controllability properties for the class of affine systems.

An {\it affine} system on a Lie group $G$ is determined by the family of ordinary differential equations
\begin{flalign*}
	&&\dot{g}(t)=F^0(g(t))+\sum_{j=1}^m\omega_j(t)F^j(g(t)), &&\hspace{-1cm}\left(\Sigma_A\right).
	\end{flalign*}
	Here, $F^{0},F^{1},\ldots,F^{m}\in \eta$ are affine vector fields on $G$. We denote the transition map of $\Sigma_A$ by $\varphi^A$ and, for any $\omega \in \mathcal{U}$ and $t\in \R$, we denote by $\varphi_{t, \omega}^A$ the diffeomorphism $g\in G\mapsto \varphi^A(t, g, \omega)\in G$.
	
	Associate with any affine system $\Sigma_A$ there is a bilinear system defined as: For any $j=0, 1, \ldots, m$ let us consider the decomposition $F^j=\XC^j+Y^j$ with $\XC^j$ linear and $Y^j$ right-invariant. We say that the bilinear system $\Sigma_B$ defined by the linear vector fields $\XC^0, \XC^1, \ldots, \XC^m$ is the {bilinear system induce by } $\Sigma_A$.



The next result gives us an expression for the solutions of an affine system $\Sigma_A$ on $G$ and show that they are intrinsically connected with the solutions of the bilinear system $\Sigma_B$ associated to $\Sigma_A$.

\begin{theorem}
\label{affine} Let $\omega \in \mathcal{U}$ be a piecewise
constant control function and consider $t_{1},\ldots,t_{n}>t_{0}=0$ and
$\omega_{1},\ldots,\omega_{n}\in \mathbb{R}^{m}$ such that $\omega
(t)=\omega_{i}$ for $\left(  \sum_{j=0}^{i-1}t_{j},\sum_{j=0}^{i}t_{j}\right]
$. If $\{ \alpha_{t}^{\omega}\}_{t\in \mathbb{R}}$ stands for the flow of the
affine vector field $F_{\omega}:=F_{0}+\sum_{j=1}^{m}F_{j}$ then
\begin{equation}
\varphi^{A}(t,x,\omega)=\alpha_{t\text{ }-\sum_{j=1}^{i-1}t_{j}}^{\omega_{i}%
}(\alpha_{t_{i-1}}^{\omega_{i-1}}(\cdots(\alpha_{t_{1}}^{\omega_{1}}%
(x))\cdots)),\; \; \;t\in \left(  \sum_{j=0}^{i-1}t_{j},\sum_{j=0}^{i}%
t_{j}\right]  .\label{solutionaffine}%
\end{equation}
Moreover, the solutions of $\Sigma_{A}$ are complete and it holds that 
\begin{equation}
\varphi_{t,\omega}^{A}=L_{\varphi_{t,\omega}^{A}(e)}\circ \varphi_{t,\omega
}^{B}.\label{affineandbilinear}%
\end{equation}

\end{theorem}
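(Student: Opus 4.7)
The plan is to mirror the proof of Theorem~\ref{bilinear} for the first assertion and then apply Lemma~\ref{compositionaffine} to obtain the factorization through the associated bilinear flow.

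First I would define
\[
\beta(t):=\alpha_{t-\sum_{j=1}^{i-1}t_{j}}^{\omega_{i}}\!\left(\alpha_{t_{i-1}}^{\omega_{i-1}}\!\left(\cdots\alpha_{t_{1}}^{\omega_{1}}(x)\cdots\right)\right),\qquad t\in\left(\sum_{j=0}^{i-1}t_{j},\sum_{j=0}^{i}t_{j}\right],
\]
and check that $\beta(0)=x$ and that $\beta$ is continuous at the switching times $\sum_{j=0}^{i}t_j$ (each affine flow is continuous in time, and the value at the right endpoint of the $i$-th interval equals the initial value at the left endpoint of the $(i+1)$-th). On the interior of the $i$-th interval, differentiating and using that $\{\alpha^{\omega_i}_s\}$ is the flow of $F_{\omega_i}$ gives $\beta'(t)=F_{\omega_i}(\beta(t))=F_{\omega(t)}(\beta(t))$, so by uniqueness of solutions to $\Sigma_A$ we conclude $\beta(t)=\varphi^A(t,x,\omega)$, which is (\ref{solutionaffine}). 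Completeness is immediate: each $\alpha^{\omega_i}$ is globally defined because affine vector fields are complete (recalled in Section~2), so any finite concatenation is defined for all $t\in\mathbb{R}$; the relation $\varphi_{-t,\omega}^A=(\varphi_{t,\theta_{-t}\omega}^A)^{-1}$ extends this to negative times.

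For the factorization (\ref{affineandbilinear}), I would apply Lemma~\ref{compositionaffine} to the right-hand side of (\ref{solutionaffine}) viewed as a composition of affine flows with parameters $\tau_1=t_1,\dots,\tau_{i-1}=t_{i-1},\tau_i=t-\sum_{j=1}^{i-1}t_j$ and indices $\omega_1,\dots,\omega_i$. The lemma yields
\[
\varphi_{t,\omega}^{A}=L_{\alpha^{\omega_i}_{\tau_i}(\cdots\alpha^{\omega_1}_{\tau_1}(e)\cdots)}\circ\psi^{\omega_i}_{\tau_i}\circ\cdots\circ\psi^{\omega_1}_{\tau_1}.
\]
Evaluating the identity (\ref{solutionaffine}) at $x=e$ identifies the subscript of $L$ as $\varphi_{t,\omega}^{A}(e)$, while Theorem~\ref{bilinear} applied to the bilinear system induced by $\Sigma_A$ identifies $\psi^{\omega_i}_{\tau_i}\circ\cdots\circ\psi^{\omega_1}_{\tau_1}$ as $\varphi_{t,\omega}^{B}$. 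Combining these gives (\ref{affineandbilinear}).

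The only delicate point is bookkeeping: the decomposition $F^j=\mathcal{X}^j+Y^j$ is required so that the flows $\psi^{\omega_i}$ appearing when we expand each $\alpha^{\omega_i}$ via (\ref{expressionslinear}) really are the linear flows of the bilinear system $\Sigma_B$ associated with $\Sigma_A$, and one must be careful that the parameters $(\tau_1,\dots,\tau_i)$ plugged into Lemma~\ref{compositionaffine} match the piecewise description of $\omega$ on $\left(\sum_{j=0}^{i-1}t_j,\sum_{j=0}^{i}t_j\right]$; once this indexing is set up correctly, the rest is an application of the already-proved lemma and theorem.
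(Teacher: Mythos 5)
Your proposal is correct and follows essentially the same route as the paper: the first claim and completeness are obtained by repeating the concatenation-plus-uniqueness argument from Theorem \ref{bilinear}, and the factorization (\ref{affineandbilinear}) is derived by applying Lemma \ref{compositionaffine}, identifying the left-translation base point via (\ref{solutionaffine}) at $x=e$ and the remaining composition of linear flows as $\varphi^{B}_{t,\omega}$ via Theorem \ref{bilinear}. No substantive differences from the paper's proof.
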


\begin{proof}
The proof of the formula \ref{solutionaffine} and the assertion on the
completeness of the solution of $\Sigma_{A}$ are similar to those in the proof
of Theorem \ref{bilinear}. Then, we will omit it. Let us prove equation
(\ref{affineandbilinear}).

From (\ref{solutionaffine}), for any $t\in \left(  \sum_{j=0}^{i-1}%
t_{j},\sum_{j=0}^{i}t_{j}\right]  $ we obtain
\[
\varphi_{t,\omega}^{A}=\alpha_{t\text{ }-\sum_{j=1}^{i-1}t_{j}}^{\omega_{i}%
}\circ \alpha_{t_{i-1}}^{\omega_{i-1}}\circ \cdots \circ \alpha_{t_{1}}%
^{\omega_{1}}.
\]
However, Lemma \ref{compositionaffine} implies that
\[
\hspace{-5cm}\alpha_{t\text{ }-\sum_{j=1}^{i-1}t_{j}}^{\omega_{i}}\circ
\alpha_{t_{i-1}}^{\omega_{i-1}}\circ \cdots \circ \alpha_{t_{1}}^{\omega_{1}}=
\]%
\[
L_{\alpha_{t\text{ }-\sum_{j=1}^{i-1}t_{j}}^{\omega_{i}}(\alpha_{t_{i-1}%
}^{\omega_{i-1}}(\cdots(\alpha_{t_{1}}^{\omega_{1}}(e))\cdots))}\circ
\psi_{t\text{ }-\sum_{j=1}^{i-1}t_{j}}^{\omega_{i}}\circ \psi_{t_{i-1}}%
^{\omega_{i-1}}\circ \cdots \circ \psi_{t_{1}}^{\omega_{1}}.
\]
On the other hand, by Theorem \ref{solutionbilinear} we get
\[
\varphi_{t,\omega}^{B}=\psi_{t\text{ }-\sum_{j=1}^{i-1}t_{j}}^{\omega_{i}%
}\circ \psi_{t_{i-1}}^{\omega_{i-1}}\circ \cdots \circ \psi_{t_{1}}^{\omega_{1}}.
\]
Therefore,
\[
\hspace{-5cm}\varphi_{t,\omega}^{A}=\alpha_{t\text{ }-\sum_{j=1}^{i-1}t_{j}%
}^{\omega_{i}}\circ \alpha_{t_{i-1}}^{\omega_{i-1}}\circ \cdots \circ
\alpha_{t_{1}}^{\omega_{1}}=
\]%
\[
L_{\alpha_{t\text{ }-\sum_{j=1}^{i-1}t_{j}}^{\omega_{i}}(\alpha_{t_{i-1}%
}^{\omega_{i-1}}(\cdots(\alpha_{t_{1}}^{\omega_{1}}(e))\cdots))}\circ
\psi_{t\text{ }-\sum_{j=1}^{i-1}t_{j}}^{\omega_{i}}\circ \psi_{t_{i-1}}%
^{\omega_{i-1}}\circ \cdots \circ \psi_{t_{1}}^{\omega_{1}}.
\]
Finally,
\[
\varphi_{t,\omega}^{A}=L_{\varphi_{t,\omega}^{A}(e)}\circ \varphi_{t,\omega
}^{B}%
\]
as we wanted to prove.
\end{proof}

\subsection*{Controllability of affine systems}

Here we show that affine systems that are locally controllable at the identity are controllable if $G$ is a compact Lie group or solvable Lie group and the derivations associated with the induced bilinear system are inner and nilpotent. 

For a given affine system $\Sigma_{A}$ on a Lie group $G$ let us denote by $\mathcal{R}$ and $\mathcal{R}^*$ its reachable set from the identity and its controllable set of the identity, respectively. Consider the bilinear system $\Sigma_B$ associate to $\Sigma_A$. We will say that a subset $W\subset G$ is {\it $\varphi^B$-invariant} if $\varphi_{t, \omega}^B(W)= W$ for any $t\in \R$ and $\omega\in\UC$, where $\varphi^B$ is the transition map of $\Sigma_B$.

\subsubsection*{Controllability of affine systems compact Lie groups}

For compact Lie groups, the next result shows that affine systems are controllable as soon as they are locally controllable at the identity.

\begin{theorem}
	An affine system $\Sigma_A$ on a compact Lie group $G$ is controllable if and only if it is locally controllable at the identity.
\end{theorem}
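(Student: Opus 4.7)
The direction ``$\Sigma_A$ controllable $\Rightarrow$ $\Sigma_A$ locally controllable at $e$'' is immediate from the definitions, so the content is the converse. Write $\RC=\RC(e)$ and $\RC^*=\RC^*(e)$. Local controllability at $e$, together with the analytic criterion from \cite{Suss0} quoted earlier, provides an open neighborhood $U$ of $e$ with $U\subset\RC\cap\RC^*$. Fix a bi-invariant Riemannian distance $d$ on $G$ and $\epsilon>0$ with $B_\epsilon(e)\subset U$. The crucial observation is that every affine flow $\varphi^A_{t,\omega}$ is an isometry of $(G,d)$: by (\ref{affineandbilinear}) it factors as $L_{\varphi^A_{t,\omega}(e)}\circ\varphi^B_{t,\omega}$, where the left translation is isometric by bi-invariance of $d$, and $\varphi^B_{t,\omega}$ is a concatenation of flows $\psi_t^u$ of linear vector fields, each of which is an isometry of a compact Lie group by the discussion at the end of Section 2.

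I would next use the isometry property to inflate $\RC$ and $\RC^*$ uniformly. If $g=\varphi^A_{t,\omega}(e)\in\RC$, then $\varphi^A_{t,\omega}(B_\epsilon(e))=B_\epsilon(g)$; and for every $h\in B_\epsilon(e)\subset\RC$ the cocycle property gives $\varphi^A_{t,\omega}(h)\in\RC$, so $B_\epsilon(g)\subset\RC$. Hence $\RC$ is a union of open balls of uniform radius $\epsilon$, and in particular open; the symmetric argument yields the same for $\RC^*$.

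The next step is to show $\RC$ is dense. By Myers--Steenrod, the isometry group of $(G,d)$ is a compact Lie group, so for fixed $t>0$ and $\omega\in\UC$ pigeonhole in the closure of $\{(\varphi^A_{t,\omega})^n\}$ yields $n_k\to\infty$ with $(\varphi^A_{t,\omega})^{n_k}\to\id$ uniformly; consequently $(\varphi^A_{t,\omega})^{n_k-1}\to(\varphi^A_{t,\omega})^{-1}=\varphi^A_{-t,\theta_t\omega}$. Each iterate $(\varphi^A_{t,\omega})^{n_k-1}(e)$ lies in $\RC$ (concatenate $\omega$ with itself $n_k-1$ times), so the limit gives $\varphi^A_{-t,\theta_t\omega}(e)\in\cl(\RC)$. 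Local controllability forces the system Lie algebra to be transitive at $e$, hence everywhere, so the orbit through $e$ is all of $G$ by the orbit theorem; an induction on the length of an orbit representation, approximating each negative-time leg by forward iterates while using $\varphi^A_{t,\omega}(\cl(\RC))\subset\cl(\RC)$ to propagate membership, yields $G=\mathcal{O}(e)\subset\cl(\RC)$.

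Density combined with the uniform $\epsilon$-fattening finishes the argument: for any $g\in G$ density produces $h\in\RC$ with $d(h,g)<\epsilon$, whence $g\in B_\epsilon(h)\subset\RC$; the symmetric argument gives $\RC^*=G$, so $\Sigma_A$ is controllable. I expect the main technical obstacle to be the induction chaining Poincar\'e-type approximations along an orbit: uniform convergence on the compact isometry group should make each step work, but one must keep track of the fact that each negative-time leg is approximated by forward iterates starting from the \emph{current} state while remaining in $\cl(\RC)$, which is why the forward-invariance of $\RC$ under positive-time flows, established via the cocycle property, is used repeatedly.
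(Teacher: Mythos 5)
Your setup --- bi-invariant metric, the factorization $\varphi^A_{t,\omega}=L_{\varphi^A_{t,\omega}(e)}\circ\varphi^B_{t,\omega}$, and the resulting fact that each $\varphi^A_{t,\omega}$ is an isometry so that $\RC$ contains the ball $B_\epsilon(g)$ around each of its points $g$ --- is correct, and this ``uniform fattening'' is exactly the mechanism of the paper's proof (there it appears as $gW=\varphi^A_{\tau,\omega}(W)$ with $W=B(e,\epsilon)$ a $\varphi^B$-invariant neighborhood). The genuine gap is in your density step. The inference ``local controllability at $e$ forces the system Lie algebra to be transitive at $e$, hence everywhere, so the orbit through $e$ is all of $G$'' is not valid: the rank of the Lie algebra generated by affine vector fields can drop away from $e$ (on a matrix group the linear field $g\mapsto Ag-gA$ vanishes on the entire centralizer of the one-parameter group $\rme^{tA}$, not only at $e$), and even granting that the orbit through $e$ is open (which does follow, in the analytic case, from $e\in\inner\RC$), an open orbit need not equal a connected $G$: its complement is closed but need not be open. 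The conclusion you want is still true, but only because of the compactness you already invoked: the closure $K$ of the group generated by the $\varphi^A_{t,\omega}$ inside the compact isometry group has a closed orbit $K\cdot e$ which contains the open set $B_\epsilon(e)$, hence is open and closed and equals $G$, and $K\cdot e\subset\cl(\RC)$ by your recurrence argument. As written, though, the step is unsupported.

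The larger point is that the whole density/recurrence detour is unnecessary, and this is where you diverge from the paper. Once you know that $g\in\RC$ implies $B_\epsilon(g)=g\,B(e,\epsilon)\subset\RC$, an immediate induction gives $W^n\subset\RC$ for every $n$, where $W=B(e,\epsilon)$; hence $\RC$ contains the semigroup $\SC_W$ generated by the (symmetric) neighborhood $W$ of the identity, and in a compact connected group such a semigroup is all of $G$. The same argument applied to $\RC^*$ finishes the proof. This is precisely how the paper concludes, in a few lines, using only the factorization through $\varphi^B$ and the isometry property; your route reaches the same place but imports Myers--Steenrod, Poincar\'e recurrence, and the orbit-theorem difficulty above without gaining anything. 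I would keep your first two paragraphs and replace everything after them by the semigroup argument.
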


\begin{proof}
	Let us fix a bi-invariant metric $d$ on $G$. By assuming that the system is locally controllable at the identity, there exists $\varepsilon>0$ such that $W:=B(e, \varepsilon)\subset\inner\RC\cap\inner\RC^*$. Moreover, for any $t\in\R$ and $\omega\in\UC$, the maps $\varphi_{t, \omega}^B$ are isometries, it holds that $W$ is a $\varphi^B$-invariant subset. Let us denote by $\SC_W$ the semigroup generated by $W$. 
	
	{\bf Claim:} It holds that $\SC_W\subset\inner\RC\cap\inner\RC^*$.
	
    Since any element in $\SC_W$ is a finite product of elements in $W$, it is enough to show that $W^n\subset\inner\RC\cap\inner\RC^*$ for any $n\in\N$ which we will do by induction. Since the case $n=1$ holds true, let us assume that $W^n\subset\inner\RC\cap\inner\RC^*$. For any $g\in W^n$ there exists $\tau_1, \tau_2\geq 0$ and $\omega_1, \omega_2\in\UC$ such that $g=\varphi^A_{\tau_1, \omega_1}(e)=\varphi^A_{-\tau_2, \omega_2}(e)$ and therefore
    $$gW=\varphi^A_{\tau_1, \omega_1}(e)W=\varphi^A_{\tau_1, \omega_1}(e)\varphi_{\tau_1, \omega_1}^B(W)=\varphi^A_{\tau_1, \omega_1}(W)\subset\varphi^A_{\tau_1, \omega_1}(\inner\RC)\subset\inner\RC$$
    and 
    $$gW=\varphi^A_{-\tau_2, \omega_2}(e)W=\varphi^A_{-\tau_2, \omega_2}(e)\varphi_{-\tau_2, \omega_2}^B(W)=\varphi^A_{-\tau_2, \omega_2}(W)\subset\varphi^A_{-\tau_2, \omega_2}(\inner\RC)\subset\RC^*$$
	Since $g\in W^n$ was arbitrary we have that $W^{n+1}\subset\inner\R\cap\inner\RC^*$ and consequently $S_W\subset\inner\RC\cap\inner\RC^*$ as stated.
	
	Since $G$ is compact and $\inner \SC_W\neq\emptyset$ we must have that $\SC_W=G$ and therefore $G=\RC\cap\RC^*$ showing that $\Sigma_A$ is controllable.	
\end{proof}

\begin{remark}
	Using the same idea of the above proof, one can actually show that controllability on compact Lie group holds on the slightly weaker assumption that $\inner\RC$ admits a compact $\varphi^B$-invariant subset.
\end{remark}

\subsubsection*{Controllability of affine systems on solvable Lie groups}

In this section, we analyze the controllability of affine systems on solvable Lie groups. In order to do that we generalize some of the results
from \cite{DS} (see also \cite{ADS}). 

\begin{lemma}
\label{ginvariance} Let $g\in \mathcal{R}$ and assume that $\varphi_{t,\omega
}^{B}(g)\in \mathcal{R}$ for all $t\in \mathbb{R}$ and $\omega \in \mathcal{U}%
$. Then
\[
\mathcal{R}\cdot g\subset \mathcal{R}.
\]
\end{lemma}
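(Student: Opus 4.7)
The plan is to exploit the factorization $\varphi^A_{t,\omega} = L_{\varphi^A_{t,\omega}(e)} \circ \varphi^B_{t,\omega}$ from Theorem \ref{affine} in order to realize an arbitrary product $hg$ with $h\in\mathcal{R}$ as an affine trajectory starting from a suitable point already in $\mathcal{R}$, and then concatenate controls.

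First I would pick an arbitrary $h\in\mathcal{R}$ and write $h=\varphi^A(t,e,\omega)=\varphi^A_{t,\omega}(e)$ for some $t\geq 0$ and $\omega\in\UC$. The goal is to express $hg$ as $\varphi^A_{t,\omega}(x)$ for some $x\in\mathcal{R}$. Using the identity from Theorem \ref{affine}, $\varphi^A_{t,\omega}(x)=h\cdot \varphi^B_{t,\omega}(x)$, so I need to solve $\varphi^B_{t,\omega}(x)=g$. The natural candidate, using that $\varphi^B_{t,\omega}$ is a diffeomorphism (in fact an automorphism, by Theorem \ref{bilinear}), is
\[
x:=(\varphi^B_{t,\omega})^{-1}(g)=\varphi^B_{-t,\theta_t\omega}(g).
\]

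Next I would invoke the standing hypothesis that $\varphi^B_{s,\nu}(g)\in\mathcal{R}$ for all $s\in\R$ and $\nu\in\UC$: applied with $s=-t$ and $\nu=\theta_t\omega$ this yields $x\in\mathcal{R}$. Then
\[
hg=h\cdot\varphi^B_{t,\omega}(x)=\varphi^A_{t,\omega}(x)=\varphi^A(t,x,\omega).
\]
Since $x\in\mathcal{R}$, there exist $s\geq 0$ and $\nu\in\UC$ with $x=\varphi^A(s,e,\nu)$. Concatenating $\nu$ on $[0,s]$ with $\omega$ on $[s,s+t]$ (as spelled out in the preliminaries on the transition map) produces $\mu\in\UC$ with $\varphi^A(s+t,e,\mu)=\varphi^A(t,\varphi^A(s,e,\nu),\omega)=hg$, so $hg\in\mathcal{R}$. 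As $h\in\mathcal{R}$ was arbitrary, $\mathcal{R}\cdot g\subset\mathcal{R}$.

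I do not anticipate a serious obstacle: the only slightly delicate point is recognizing that the assumption is symmetric under time reversal, so that $x$ can be forced into $\mathcal{R}$ by running the bilinear flow backwards; this is what makes the hypothesis ``for all $t\in\R$'' (rather than only $t\geq 0$) essential. Beyond that, the argument is a direct bookkeeping calculation using the factorization formula and the cocycle/concatenation properties of the transition map.
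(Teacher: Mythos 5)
Your proposal is correct and follows essentially the same route as the paper: both use the hypothesis at time $-t$ with the shifted control to place $x=\varphi^B_{-t,\theta_t\omega}(g)=(\varphi^B_{t,\omega})^{-1}(g)$ in $\mathcal{R}$, and then apply the factorization $\varphi^A_{t,\omega}=L_{\varphi^A_{t,\omega}(e)}\circ\varphi^B_{t,\omega}$ to write $hg=\varphi^A_{t,\omega}(x)\in\varphi^A_{t,\omega}(\mathcal{R})\subset\mathcal{R}$. The only difference is that you spell out the final inclusion via explicit concatenation of controls, which the paper leaves implicit.
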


\begin{proof}
Let $h=\varphi_{\tau,\omega}^{A}(e)\in \mathcal{R}$. By hypothesis we have that $\varphi_{-\tau
,\theta_{\tau}\omega}^{B}(g)\in \mathcal{R}$. Hence, by Theorem \ref{affine} we get
\[
hg=L_{\varphi_{\tau,\omega}^{A}(e)}\cdot g=\left(L_{\varphi_{\tau,\omega}^{A}(e)}\circ
\varphi_{\tau,\omega}^{B}\right)(\varphi_{-\tau,\Theta_{\tau}\omega}^{B}%
(g))=\varphi_{\tau,\omega}^{A}\left(\varphi_{-\tau,\Theta_{\tau}\omega}^{B}%
(g)\right)\in \varphi_{\tau,\omega}^{A}(\mathcal{R})\subset \mathcal{R}%
\]
as stated.
\end{proof}

The next result assures that a $\varphi^B$-invariant subgroup is contained in $\RC$ if the the exponential of elements of its Lie algebra is in $\RC$.

\begin{proposition}
\label{Hinvariance} Let $H$ be a connected $\varphi^B$-invariant Lie subgroup
with Lie algebra $\mathfrak{h}$. It holds that 
\[
\exp(X)\in \mathcal{R}\text{ for any \ }X\in \mathfrak{h}\implies H\subset \mathcal{R}.
\]

\end{proposition}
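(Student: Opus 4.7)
The plan is to combine Lemma \ref{ginvariance} with the fact that a connected Lie subgroup is generated by exponentials of its Lie algebra. First I would verify that, for every $X\in\mathfrak{h}$, the element $g:=\exp(X)$ satisfies the two hypotheses of Lemma \ref{ginvariance}. The condition $\exp(X)\in\mathcal{R}$ is given. For the second condition I would exploit that, by Theorem \ref{bilinear}, each $\varphi^{B}_{t,\omega}$ lies in $\mathrm{Aut}(G)$, and that $H$ is $\varphi^{B}$-invariant; hence $\varphi^{B}_{t,\omega}$ restricts to a Lie group automorphism of $H$. Differentiating at the identity, this restriction gives a Lie algebra automorphism of $\mathfrak{h}$, so $(d\varphi^{B}_{t,\omega})_{e}X\in\mathfrak{h}$, and by naturality of $\exp$
\[
\varphi^{B}_{t,\omega}(\exp X)=\exp\bigl((d\varphi^{B}_{t,\omega})_{e}X\bigr)\in\exp(\mathfrak{h})\subset\mathcal{R}.
\]

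With both hypotheses of Lemma \ref{ginvariance} verified, I conclude that $\mathcal{R}\cdot\exp(X)\subset\mathcal{R}$ for every $X\in\mathfrak{h}$. Since $H$ is a connected Lie subgroup with Lie algebra $\mathfrak{h}$, every $h\in H$ admits a decomposition $h=\exp(X_{1})\cdots\exp(X_{k})$ with $X_{1},\ldots,X_{k}\in\mathfrak{h}$. Starting from $e=\exp(0)\in\mathcal{R}$ and multiplying on the right by $\exp(X_{i})$ step by step, a straightforward induction on $k$ using the inclusion $\mathcal{R}\cdot\exp(X_{i})\subset\mathcal{R}$ at each stage yields $h\in\mathcal{R}$, proving $H\subset\mathcal{R}$.

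The only subtle point is step one: one has to recognise that $\varphi^{B}$-invariance of the subgroup $H$ is precisely the ingredient needed to upgrade the pointwise hypothesis $\exp(\mathfrak{h})\subset\mathcal{R}$ to the stronger statement $\varphi^{B}_{t,\omega}(\exp X)\in\mathcal{R}$ required by Lemma \ref{ginvariance}. Once this bridge is in place, the remainder is a purely algebraic decomposition argument based on the connectedness of $H$, and involves no further analysis of the control system.
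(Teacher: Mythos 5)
Your proof is correct and follows essentially the same route as the paper: verify the hypotheses of Lemma \ref{ginvariance} for $g=\exp(X)$ by showing $\varphi^{B}_{t,\omega}(\exp X)\in\exp(\mathfrak{h})\subset\mathcal{R}$, then write an arbitrary element of the connected subgroup $H$ as a product of exponentials and iterate. The only (cosmetic) difference is that you justify $(d\varphi^{B}_{t,\omega})_{e}\mathfrak{h}=\mathfrak{h}$ abstractly from the fact that $\varphi^{B}_{t,\omega}$ is an automorphism preserving $H$, whereas the paper invokes the explicit formula of Corollary \ref{solutionbilinearexp} for that differential as a product of exponentials of derivations; your version is if anything slightly cleaner.
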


\begin{proof}
From Corollary \ref{solutionbilinearexp}, for any $X\in \mathfrak{h}$ and
$\omega \in \mathcal{U}$ we know that
\[
\varphi_{t,\omega}^{B}(\exp(X))=\exp \left(  \mathrm{e}^{(t\text{ }-\sum
_{j=1}^{i}t_{j})\mathcal{D}_{\omega_{i}}}\mathrm{e}^{t_{i-1}\mathcal{D}%
_{\omega_{i-1}}}\cdots \mathrm{e}^{t_{1}\mathcal{D}_{\omega_{1}}}X\right)
,\text{ for every }t\in \mathbb{R}.
\]
However, since $H$ is $\Sigma_{B}$-invariant we have that
\[
\varphi_{t,\omega}^{B}(\exp(X))\in H,\text{ for every }t\in \mathbb{R}
\]
and therefore,%
\[
\mathrm{e}^{(t\text{ }-\sum_{j=1}^{i}t_{j})\mathcal{D}_{\omega_{i}}}%
\mathrm{e}^{t_{i-1}\mathcal{D}_{\omega_{i-1}}}\cdots \mathrm{e}^{t_{1}%
\mathcal{D}_{\omega_{1}}}X\in \mathfrak{h}.
\]
By the assumption we obtain
\[
\varphi_{t,\omega}^{B}(\exp(X))=\exp \left(  \mathrm{e}^{\left(t-\sum
	_{j=1}^{i}t_{j}\right)\mathcal{D}_{\omega_{i}}}\mathrm{e}^{t_{i-1}\mathcal{D}%
	_{\omega_{i-1}}}\cdots \mathrm{e}^{t_{1}\mathcal{D}_{\omega_{1}}}X\right)\in \mathcal{R}\text{ for any }t\in
\mathbb{R},\omega \in \mathcal{U}\text{ and }X\in \mathfrak{h}.
\]

Moreover, the connectedness of $H$ implies that any $x\in H$ can be written as
\[
x=\exp(X_{1})\cdots \exp(X_{n}), \;\;\mbox{ for }\;\;X_{1},\ldots
,X_{n}\in \mathfrak{h}
\]
which by Lemma \ref{ginvariance} implies that 
\[
x\in \mathcal{R}\cdot \exp(X_{1})\cdot \exp(X_{2})\cdots \exp(X_{n})\subset
\cdots \subset \mathcal{R}\cdot \exp(X_{n})\subset \mathcal{R}%
\]
concluding the proof.
\end{proof}

\begin{proposition}
\label{ideal} Let $N\subset H\subset G$ two connected Lie subgroups with Lie
subalgebras $\mathfrak{n}\subset \mathfrak{h}\subset \mathfrak{g}$, respectively. Assume that $\mathfrak{n}$ is an ideal of $\mathfrak{h}$ and that $\mathcal{D}%
^{j}(\mathfrak{h})\subset \mathfrak{n}$, for any $j=0, 1,\ldots, m$. If the systems is locally controllable at the identity, then 
$$N\subset\RC\implies H\subset\RC.$$
\end{proposition}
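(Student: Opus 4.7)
The plan is to verify the two hypotheses of Proposition \ref{Hinvariance} for the subgroup $H$, namely that $H$ is $\varphi^B$-invariant and that $\exp(\mathfrak{h}) \subset \mathcal{R}$.

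For the $\varphi^B$-invariance of $H$, fix $X \in \mathfrak{h}$. Because $\mathcal{D}^j(\mathfrak{h}) \subset \mathfrak{n}$ and, in particular, $\mathcal{D}^j(\mathfrak{n}) \subset \mathfrak{n}$, expanding each $e^{s \mathcal{D}_{\omega_j}}$ in its power series and iterating gives
\[
e^{s_i \mathcal{D}_{\omega_i}} \cdots e^{s_1 \mathcal{D}_{\omega_1}} X = X + Y, \qquad Y \in \mathfrak{n}.
\]
By Corollary \ref{solutionbilinearexp} this yields $\varphi^B_{t,\omega}(\exp X) = \exp(X + Y) \in H$; since $H$ is generated by $\exp(\mathfrak{h})$ and each $\varphi^B_{t,\omega}$ is a $G$-automorphism, it follows that $\varphi^B_{t,\omega}(H) = H$. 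The same calculation applied to $X \in \mathfrak{n}$ shows that $N$ is $\varphi^B$-invariant as well; combined with $N \subset \mathcal{R}$, Lemma \ref{ginvariance} then delivers $\mathcal{R} \cdot N \subset \mathcal{R}$.

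Next, since $\mathfrak{n}$ is an ideal of $\mathfrak{h}$ and both $N$ and $H$ are connected, $N$ is a normal subgroup of $H$, so the projection $\pi : H \to H/N$ is a Lie group homomorphism with differential $\pi_* : \mathfrak{h} \to \mathfrak{h}/\mathfrak{n}$. Local controllability at $e$ provides a neighborhood $U$ of $0$ in $\mathfrak{h}$ such that $\exp X \in \mathcal{R}$ for every $X \in U$. For such an $X$, the equality $\pi_*(X + Y) = \pi_* X$ and the naturality of $\exp$ give $\pi(\exp(X+Y)) = \pi(\exp X)$, hence $\exp(X+Y) = \exp(X) \cdot n$ for some $n \in N$. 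Consequently
\[
\varphi^B_{t,\omega}(\exp X) = \exp(X) \cdot n \in \mathcal{R} \cdot N \subset \mathcal{R}
\]
for every $t$ and $\omega$. Lemma \ref{ginvariance} now applies to $g = \exp X$ and yields $\mathcal{R} \cdot \exp X \subset \mathcal{R}$; inductively $(\exp X)^k = \exp(kX) \in \mathcal{R}$ for every $k \in \mathbb{N}$.

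Given an arbitrary $Z \in \mathfrak{h}$, choosing $k$ with $Z/k \in U$ produces $\exp Z = (\exp(Z/k))^k \in \mathcal{R}$, so $\exp(\mathfrak{h}) \subset \mathcal{R}$; Proposition \ref{Hinvariance} applied to the $\varphi^B$-invariant $H$ then concludes $H \subset \mathcal{R}$. The step I would scrutinize most carefully is the factorization $\exp(X+Y) = \exp(X) \cdot n$ with $n \in N$: it is precisely here that the ideal hypothesis $[\mathfrak{h}, \mathfrak{n}] \subset \mathfrak{n}$ intervenes, via the normality of $N$ in $H$ and the resulting well-definedness of the quotient $H/N$.
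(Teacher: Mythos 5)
Your proof is correct, and it rests on the same key computation as the paper's --- namely that $\mathcal{D}^j(\mathfrak{h})\subset\mathfrak{n}$ forces $\mathrm{e}^{\tau_n\mathcal{D}_{u^n}}\cdots\mathrm{e}^{\tau_1\mathcal{D}_{u^1}}X\in X+\mathfrak{n}$ and hence $\varphi^B_{t,\omega}(\exp X)\in\exp(X)N$ --- but the way you convert this into $H\subset\mathcal{R}$ is genuinely different. The paper takes a connected neighborhood $W=\exp(U)\subset\mathcal{R}\cap H$ and proves $W^{n}\subset\mathcal{R}$ by a direct induction, at each step writing $hg=\varphi^{A}_{\tau+\tau',\omega''}(e)\,l^{-1}$ with $l\in N$ and absorbing $l^{-1}$ through the identity $\varphi^{A}_{t,\omega}=L_{\varphi^{A}_{t,\omega}(e)}\circ\varphi^{B}_{t,\omega}$. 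You instead reuse the machinery already established: Lemma \ref{ginvariance} applied twice (first to get $\mathcal{R}\cdot N\subset\mathcal{R}$, then to get $\mathcal{R}\cdot\exp X\subset\mathcal{R}$ for $X\in U$), the observation that $\exp(kX)=(\exp X)^{k}$ upgrades $\exp(U)\subset\mathcal{R}$ to $\exp(\mathfrak{h})\subset\mathcal{R}$, and finally Proposition \ref{Hinvariance}. This buys a shorter, more modular argument with no fresh induction, at the price of having to verify the extra hypothesis of Proposition \ref{Hinvariance} that $H$ itself is $\varphi^{B}$-invariant --- which you do, and which the paper's direct route never needs.

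One technical point deserves attention, and it is exactly the one you flagged: the factorization $\exp(X+Y)=\exp(X)\,n$ with $n\in N$. Your derivation passes through the quotient homomorphism $\pi:H\to H/N$, but a connected Lie subgroup $N$ corresponding to an ideal need not be closed in $H$ (an irrational one-parameter subgroup of a torus already corresponds to an ideal of an abelian algebra), so $H/N$ need not be a Lie group and the naturality of $\exp$ does not literally apply. The inclusion $\exp(X+\mathfrak{n})\subset\exp(X)N$ is nevertheless true --- it is Lemma 3.1 of W\"ustner, which the paper cites at precisely this point --- and your argument can be repaired by running it in the universal cover $\tilde H$ of $H$, where the analytic subgroup with algebra $\mathfrak{n}$ is closed and normal, and then projecting back to $H$. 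So this is a patchable gap in justification rather than a flaw in the strategy.
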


\begin{proof}
For any $X\in \mathfrak{h}$, $t\in \mathbb{R}$ and $u=(u_{1},\ldots,u_{m}%
)\in \mathbb{R}^{m}$ it holds that
\[
\mathrm{e}^{t\mathcal{D}_{u}}X=X+\sum_{n\in \mathbb{N}}\frac{t^{n}}%
{n!}\mathcal{D}_{u}^{n}(X),\, \mbox{ where }\mathcal{D}_{u}=\mathcal{D}%
^{0}+\sum_{j=1}^{m}u_{j}\mathcal{D}^{j}.
\]
By the hypothesis on every $\mathcal{D}^{j},$ $j=0,1,\ldots,m$ we get that
$$\sum_{n\in \mathbb{N}}\frac{t^{n}}{n!}\mathcal{D}_{u}^{n}(X)\in \mathfrak{n}\implies \mathrm{e}^{t\mathcal{D}_{u}}X\in X+\mathfrak{n}\;\;\mbox{ for any }t\in\R, u\in\R^m.$$
Inductively, for any $\tau_{1},\ldots,\tau_{n}\in \mathbb{R}$, $u^{1},\ldots,u^{n}\in \mathbb{R}^{m}$ and $X\in \mathfrak{h}$
we obtain
\[
\mathrm{e}^{\tau_{n}\mathcal{D}_{u^{n}}}\mathrm{e}^{\tau_{n-1}\mathcal{D}%
_{u^{n-1}}}\cdots \mathrm{e}^{\tau_{1}\mathcal{D}_{u^{1}}}X\in X+\mathfrak{n}.
\]
which by Corollary \ref{solutionbilinearexp} implies that 
$$\varphi_{t,\omega}^{B}(\exp X)\in \exp(X+\mathfrak{n}), \;\;\mbox{ for any }X\in \mathfrak{h}, \;\;\mbox{ for any }t\in\R, \omega\in\UC.$$
However, since $N$ is a normal subgroup of $H$ we have by Lemma 3.1 of \cite{Wunster} that $\exp(X+\fn)\subset\exp(X)N$ for any $X\in\fh$ implying that 
\begin{equation}
\label{salvador}
\varphi_{t,\omega}^{B}(\exp X)\in \exp(X)N, \;\;\;\mbox{ for any }\;X\in\fh, \;t\in\R \;\mbox{ and }\;\omega\in\UC.
\end{equation}

Let $W=\exp(U)$ be a connected neighborhood of $e\in H.$ By hypothesis,
$\mathcal{R}$ is an open neighborhood of the identity, so $W$ can be chosen
such that $W\subset \mathcal{R}\cap H$. Since $H$ is a connected subgroup, to
finish the proof it is enough to show that $W^{n}\subset \mathcal{R}$ for any
$n\in \mathbb{N} $. We prove it by induction. For $n=1$ the neighborhood $W$ is
a subset of $\mathcal{R}$ by construction. Let then $g=\exp(X)\in W$ and $h\in
W^{n-1}$. By the induction hypothesis we have that $h=\varphi_{\tau,\omega
}^{A}(e)$ for some $\tau>0$ and $\omega \in \mathcal{U}$.
Moreover, from equation (\ref{salvador}) $\varphi_{\tau,\omega}^{B}(g)=gl$ with $l\in N$. Therefore,
\[
hg=L_{\varphi_{\tau,\omega}^{A}(e)}\left( \varphi_{\tau,\omega}%
^{B}(g)\right)\, l^{-1}=\varphi_{\tau,\omega}^{A}(g)\,l^{-1}.
\]
Since by construction $g=\varphi_{\tau^{\prime},\omega^{\prime}}^{A}(e)$ for
some $\tau^{\prime}>0$ and $\omega^{\prime}\in \mathcal{U}$ we
get that $\varphi_{\tau,\omega}^{A}(g)=\varphi_{\tau+\tau^{\prime}%
,\omega^{\prime \prime}}^{A}(e)$, where $\omega^{\prime \prime}\in
\mathcal{U}$ is the concatenation of the control $\omega$ and
$\omega^{\prime}$. By the $\varphi^B$-invariance of $N$ and the fact that
$N\subset \mathcal{R}$ we obtain
\[
\varphi_{-\tau-\tau^{\prime},\Theta_{\tau+\tau^{\prime}}\omega^{\prime \prime}%
}^{B}(l^{-1})\in \mathcal{R}%
\]
which gives us
\[
h\cdot g=L_{\varphi_{\tau+\tau^{\prime},\omega^{\prime \prime}}^{A}%
(e)}(l^{-1})=L_{\varphi_{\tau+\tau^{\prime},\omega^{\prime \prime}}%
^{A}(e)}\circ \varphi_{\tau+\tau^{\prime},\omega^{\prime \prime}}^{B}\left(
\varphi_{-\tau-\tau^{\prime},\Theta_{\tau+\tau^{\prime}}\omega^{\prime \prime}%
}^{B}(l^{-1})\right)
\]%
\[
=\varphi_{\tau+\tau^{\prime},\omega^{\prime \prime}}^{A}\left(  \varphi
_{-\tau-\tau^{\prime},\Theta_{\tau+\tau^{\prime}}\omega^{\prime \prime}}%
^{B}(l^{-1})\right)  \in \varphi_{\tau+\tau^{\prime},\omega^{\prime
\prime}}^{A}(\mathcal{R})\subset \mathcal{R}
\]
completing the proof.
\end{proof}

The above result applies directly to solvable Lie groups as follows:

\begin{corollary}
Let $G$ be a solvable Lie group and assume the system is locally controllable at the identity. If
$N\subset G$ is the nilradical of $G$ then $N\subset \mathcal{R}$ implies $G=\mathcal{R}.$
\end{corollary}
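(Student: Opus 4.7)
The plan is to apply Proposition~\ref{ideal} with $H=G$, $\fh=\fg$, and $N$ the (connected) nilradical subgroup. Since Proposition~\ref{ideal} already assumes local controllability at the identity and delivers the implication $N\subset\RC \Rightarrow H\subset\RC$, the corollary reduces to verifying its two algebraic hypotheses in the solvable setting: that $\fn$ is an ideal of $\fg$, and that $\DC^j(\fg)\subset\fn$ for every $j=0,1,\ldots,m$. The first is immediate from the definition of the nilradical as the largest nilpotent ideal.

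The core step is the second. It rests on the classical fact that every derivation of a solvable Lie algebra (in characteristic zero) maps the entire algebra into its nilradical. A self-contained argument would be the following: given a derivation $\DC$ of the solvable Lie algebra $\fg$, form the semidirect product $\widetilde{\fg}=\fg\rtimes_{\DC}\R$, which is again solvable. Consequently $[\widetilde{\fg},\widetilde{\fg}]$ is a nilpotent ideal of $\widetilde{\fg}$, and therefore $\fm:=[\widetilde{\fg},\widetilde{\fg}]\cap\fg$ is a nilpotent ideal of $\fg$ (the intersection of two ideals of $\widetilde{\fg}$), hence $\fm\subset\fn$. Since $\DC(X)=[\DC,X]\in [\widetilde{\fg},\widetilde{\fg}]\cap\fg=\fm$ for every $X\in\fg$, we conclude $\DC(\fg)\subset\fn$. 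Applying this to each $\DC^j$ yields the required inclusion. As a by-product, since $\fn\subset\fg$, this also gives $\DC^j(\fn)\subset\fn$, which is the infinitesimal form of the $\varphi^B$-invariance of $N$ that is implicitly used inside the proof of Proposition~\ref{ideal}.

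With both hypotheses in place, Proposition~\ref{ideal} yields $N\subset\RC \Rightarrow G\subset\RC$, i.e. $\RC=G$, which is the conclusion of the corollary. The main (and really only) obstacle is the derivation--nilradical inclusion; once that purely algebraic fact is invoked, the corollary is a direct specialization of Proposition~\ref{ideal}, with no further control-theoretic work required.
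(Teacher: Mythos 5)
Your proof is correct and follows essentially the same route as the paper: both reduce the corollary to Proposition \ref{ideal} applied with $H=G$, $\fh=\fg$ and $N$ the nilradical, via the classical fact that every derivation of a solvable Lie algebra maps it into the nilradical. The only difference is that you supply a self-contained (and correct) justification of that algebraic fact through the semidirect product $\fg\rtimes_{\DC}\R$, whereas the paper simply cites it.
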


\begin{proof}
In fact, if $\mathfrak{g}$ is a solvable Lie algebra and $\mathfrak{n}$ its
nilradical then $\mathcal{D}(\mathfrak{g})\subset \mathfrak{n}$ for any
derivation $\mathcal{D}$ of $\mathfrak{g}$. The result follows from
Proposition \ref{ideal} above.
\end{proof}

\bigskip

Now we are able to prove our main result concerning the controllability of
affine systems on solvable Lie groups.

\begin{theorem}
Let $\Sigma_A$ be an affine system on a solvable Lie group $G$. For $j=0,1,\ldots, m$ let us assume that the $\fg$-derivations $\mathcal{D}^{j}$ induced by the associated bilinear system $\Sigma_B$ are inner and nilpotent. Then $\Sigma_A$ is controllable if and only if it is locally controllable at the identity.
\end{theorem}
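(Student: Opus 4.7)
My plan is to reduce the task to the corollary immediately preceding the theorem, which states that on a solvable Lie group with local controllability at $e$, containment of the nilradical $N$ in $\RC$ forces $G=\RC$. Thus the real goal becomes showing $N\subset\RC$. The forward implication of the theorem being trivial, I focus on the converse.

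The first step will be purely algebraic: the hypothesis that each $\DC^j=\ad(X^j)$ is nilpotent says $X^j$ is ad-nilpotent in the solvable Lie algebra $\fg$, and by a classical consequence of Lie's and Engel's theorems (the ad-nilpotent elements of a solvable Lie algebra form a nilpotent ideal, hence sit inside the nilradical), I will conclude $X^j\in\fn$ for every $j$. This is the essential structural input that makes each $\DC^j$ shift down the upper central series of $\fn$ by exactly one step.

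The main argument will be an induction along the upper central series of $\fn$,
$$0 = \fz_0 \subset \fz_1 = Z(\fn) \subset \fz_2 \subset \cdots \subset \fz_k = \fn.$$
Each $\fz_i$ is characteristic in $\fn$, and since $\fn$ is characteristic in $\fg$, each $\fz_i$ is characteristic in $\fg$; the corresponding connected Lie subgroup $Z_i\subset G$ is thus $\varphi^B$-invariant, every $\varphi^B_{t,\omega}$ being an automorphism of $G$. For the base case $Z_1\subset\RC$, I use that $[X^j,\fz_1]=0$, so Corollary \ref{solutionbilinearexp} forces $\varphi^B_{t,\omega}(\exp Y)=\exp Y$ for every $Y\in\fz_1$. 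Combining local controllability with Lemma \ref{ginvariance}, the set $\{Y\in\fz_1:\exp Y\in\RC\}$ contains a neighborhood of the origin in $\fz_1$ and is closed under addition (since $\fz_1$ is abelian, so $\exp(Y_1+Y_2)=\exp Y_1\cdot\exp Y_2$), and therefore equals $\fz_1$; Proposition \ref{Hinvariance} then promotes this to $Z_1\subset\RC$. For the inductive step, the identity $\DC^j(\fz_{i+1})=[X^j,\fz_{i+1}]\subset[\fn,\fz_{i+1}]\subset\fz_i$ is precisely what Proposition \ref{ideal} needs, applied to the pair $Z_i\subset Z_{i+1}$, so $Z_i\subset\RC$ forces $Z_{i+1}\subset\RC$. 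After $k$ iterations, $N=Z_k\subset\RC$, and the preceding corollary yields $G=\RC$.

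The equality $G=\RC^*$ then follows by applying the statement just proved to the time-reversed affine system with vector fields $-F^j$: its associated bilinear derivations $-\DC^j=\ad(-X^j)$ remain inner and nilpotent, and its reachable set from $e$ coincides with $\RC^*$ for $\Sigma_A$. Hence $\Sigma_A$ is controllable. The main obstacle I anticipate is the algebraic Step~1, pinning down that nilpotency of the inner derivation $\ad(X^j)$ in a solvable $\fg$ forces $X^j\in\fn$; once that is in hand, the rest is a bookkeeping induction up a characteristic filtration, whose only subtle point is verifying that each $\fz_i$ is characteristic in $\fg$ (not merely in $\fn$), so that the $Z_i$ are $\varphi^B$-invariant and Proposition \ref{Hinvariance} applies.
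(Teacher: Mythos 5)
Your proof is correct and follows essentially the same route as the paper's: reduce to the preceding corollary, observe that nilpotency of the inner derivations places each $X^j$ in the nilradical, and climb a central series of $\fn$ step by step via Proposition \ref{ideal}. The only cosmetic differences are that the paper descends the lower central series rather than ascending the upper one (which also makes your separate base case unnecessary, since Proposition \ref{ideal} applied to the pair $\{e\}\subset Z_1$ already covers it), and that it obtains $G=\RC^*$ ``analogously'' rather than by your explicit time-reversal argument.
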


\begin{proof}
	By the above corollary, it is enough for us to show that $N\subset\RC$, where $N$ is the nilradical of $G$. Let
\[
\mathfrak{n}=\mathfrak{n}_{1}\supset \mathfrak{n}_{2}\supset \ldots
\supset \mathfrak{n}_{k}\supset \mathfrak{n}_{k+1}=\{0\},
\]
be the lower central series of $\mathfrak{n}$, where for $i=2,\ldots,k$, we
have that $\mathfrak{n}_{i}=[\mathfrak{n},\mathfrak{n}_{i-1}]$ are ideals of
$\mathfrak{n}$. Since $D^j$ is inner and nilpotent we have that $D^j=\ad(X^j)$ for $X^j\in\fn$, $j=0, 1, \ldots, m$ implying that
$\mathcal{D}^{j}(\mathfrak{n}_{i})\subset \mathfrak{n}_{i+1}$ for
$i=1,\ldots,n$. Therefore, if $N_{i}$ is the connected Lie group with Lie
algebra $\mathfrak{n}_{i}$, $i=1,\ldots,k$, it turns out
\[
N=N_{1}\supset N_{2}\supset \ldots \supset N_{k}\supset N_{k+1}=\{e\}
\]
is the lower central series on the group level. But, $N_{k+1}=\{e\}
\subset \mathcal{R}$ which by Proposition \ref{ideal} we get $G_{k}%
\subset \mathcal{R}$. Again we can apply Proposition \ref{ideal} to get that
$G_{k-1}\subset \mathcal{R}$. By repeating the same $k$-times, we get that
$G=G_{1}\subset \mathcal{R}$. Since $\Sigma_A$ is analytic we have also that $e\in\inner\RC^*$ and we can analogously show that $G=\RC^*$ implying that $\Sigma_A$ is controllable.
\end{proof}

In particular, for nilpotent Lie groups we have the following:

\begin{corollary}
	Let $\Sigma_A$ be an affine system on a nilpotent Lie group $G$. For $j=0,1,\ldots, m$ let us assume that the $\fg$-derivations $\mathcal{D}^{j}$ induced by the associated bilinear system $\Sigma_B$ are inner. Then $\Sigma_A$ is controllable if and only if it is locally controllable at the identity.  
\end{corollary}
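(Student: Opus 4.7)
The proof is a direct application of the previous theorem, and the plan is to verify that the extra hypothesis (nilpotency of the derivations) is automatic in the nilpotent case. More precisely, I will show that the hypotheses of the theorem are satisfied under the weaker assumption stated in the corollary, so that the conclusion follows immediately.

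First I would note that every nilpotent Lie group is solvable, so the ambient setting of the previous theorem is in force. Next, I would invoke the fact that if $\fg$ is a nilpotent Lie algebra with lower central series
\[
\fg = \fg_1 \supset \fg_2 \supset \cdots \supset \fg_k \supset \fg_{k+1} = \{0\},
\]
then for any $X \in \fg$ the inner derivation $\ad(X) : \fg \to \fg$ satisfies $\ad(X)(\fg_i) \subset \fg_{i+1}$ for every $i$, and consequently $\ad(X)^{k} = 0$. Thus every inner derivation of a nilpotent Lie algebra is automatically a nilpotent operator. Since by hypothesis each $\DC^j$ is inner, each $\DC^j$ is therefore also nilpotent.

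With that observation the previous theorem applies verbatim: on the solvable Lie group $G$, the induced derivations $\DC^0, \DC^1, \ldots, \DC^m$ are inner and nilpotent, so $\Sigma_A$ is controllable if and only if it is locally controllable at the identity. The forward implication is trivial from the definition of controllability, and the reverse is exactly the content of the preceding theorem. There is no real obstacle here; the only substantive point is the elementary fact that inner derivations in a nilpotent Lie algebra are nilpotent operators, which is essentially the definition of nilpotency of $\fg$ (or Engel's theorem).
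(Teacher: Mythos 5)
Your proof is correct and matches the paper's (implicit) argument: the corollary is stated as an immediate consequence of the preceding theorem, and the only point to check is exactly the one you supply, namely that in a nilpotent Lie algebra every inner derivation $\ad(X)$ is a nilpotent operator (since $\ad(X)(\fg_i)\subset\fg_{i+1}$ along the lower central series), so the hypotheses of the solvable-group theorem hold automatically. Nothing further is needed.
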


\section{Acknowledgements}

The first author was supported by Proyecto Fondecyt $n^{o}$ 1150292, Conicyt,
Chile. The second author was supported by Fapesp grant $n^{o}$ 2016/11135-2 and
the third one by CNPq grant $n^{o}$ 246762/2012-8.

We would like to thank the Centro de Estudios Cient\'{\i}ficos (CECs) in
Valdivia, Chile, through Prof. Dr. Jorge Zanelli, for providing to the first and
second authors an excellent environment to work out on this article.

\end{document}